\newtheorem{thm}{Theorem}[section]
\newtheorem{cor}[thm]{Corollary}
\newtheorem{lem}[thm]{Lemma}
\newtheorem{prop}[thm]{Proposition}
\newtheorem{theorem}{Theorem}
\newtheorem{corollary}[theorem]{Corollary}
\theoremstyle{definition}
\newtheorem{defn}[thm]{Definition}
\newtheorem{example}[thm]{Example}
\newtheorem{rem}[thm]{Remark}
\newtheorem*{ack}{Acknowledgements}
\numberwithin{equation}{section}
\newcommand{\scal}[1]{\langle #1 \rangle}
\def\P{\ensuremath{\mathsf{p}}}
\DeclareMathOperator{\Diff}{Diff}
\newcommand{\RR}{\mathbb{R}}
\newcommand{\CC}{\mathbb{C}}
\DeclareMathOperator{\diam}{diam}
\DeclareMathOperator{\Iso}{Isom}
\DeclareMathOperator{\codim}{codim}
\newcommand{\CP}{\mathbb{C}P}
\newcommand{\sphere}{\mathrm{\mathbb{S}}}
\newcommand{\ZZ}{\mathbb{Z}}
\newcommand{\fol}{\mathcal{F}}
\newcommand{\SO}{\mathrm{SO}}
\newcommand{\On}{\mathrm{O}}
\def\ol{\overline}
\def\lra{\longrightarrow}
\def\ra{\rightarrow}
\def\In{\subseteq}
\def\CC{\mathbb{C}}
\def\PP{\Bbb{P}}
\def\RR{\mathbb{R}}
\def\ZZ{\mathbb{Z}}
\def\mc{\mathcal}
\def\leaf{\mc{L}}
\def\fol{\mc{F}}
\def\O{\textrm{O}}
\def\codim{\textrm{codim\,}}
\begin{document}

%-----------------------------------------------------------------------------
% BEGIN FRONT MATTER ----------------------------------------------------------
%-----------------------------------------------------------------------------

% TITLE

\title[Singular Riemannian foliations]{Singular Riemannian foliations and  applications to positive and nonnegative curvature}

% AUTHOR 1

\author[F. Galaz-Garcia]{Fernando Galaz-Garcia$^*$}
\address[Galaz-Garcia]{Mathematisches Institut, WWU M\"unster, Germany.}
\curraddr{Institut f\"ur Algebra und Geometrie, Karlsruher Institut f\"ur Technologie (KIT), Karlsruhe, Germany.}
\email{galazgarcia@kit.edu}
\thanks{$^*$ Partially supported by  SFB 878: \emph{Groups, Geometry \& Actions}, at the University of M\"unster.}

% AUTHOR 2

\author[M. Radeschi]{Marco Radeschi$^*$}
\address[Radeschi]{Mathematisches Institut, WWU M\"unster, Germany.}
\curraddr{Institut f\"ur Algebra und Geometrie, Karlsruher Institut f\"ur Technologie (KIT), Karlsruhe, Germany.}
\email{mrade\_02@uni-muenster.de}
\thanks{}

% DATE
\date{\today}

% MATH SUBJECT CLASSIFICATION AND KEYWORDS

\subjclass[2000]{53C20, 53C23, 53C12, 57R30}
\keywords{singular Riemannian foliation, aspherical manifold, Bieberbach manifold, positive curvature, nonnegative curvature}

% ABSTRACT

\begin{abstract}
We determine the structure of the fundamental group of the regular leaves of a closed singular Riemannian foliation on a compact, simply connected Riemannian manifold. We also study closed singular Riemannian foliations whose leaves are homeomorphic to aspherical or to Bieberbach manifolds. These foliations, which we call \emph{A-foliations} and \emph{B-foliations}, respectively, generalize isometric torus actions on Riemannian manifolds. We apply our results to the classification problem of  compact, simply connected Riemannian $4$- and $5$-manifolds with positive or nonnegative sectional curvature.
\end{abstract}

\maketitle

\tableofcontents

%-----------------------------------------------------------------------------
% END FRONT MATTER ----------------------------------------------------------
%-----------------------------------------------------------------------------

%-----------------------------------------------------------------------------
%	MAIN MATTER	-----------------------------------------------------------------------------
%-----------------------------------------------------------------------------

%---------------------------------------------
% SECTION: INTRODUCTION
%---------------------------------------------

\section{Introduction}
The study of effective smooth torus actions on  compact, smooth manifolds has a rich and long tradition in the theory of  smooth transformation groups (cf.~\cite{Br, Ko}). 
In Riemannian geometry, starting with Hsiang and Kleiner's topological classification of compact Riemannian  $4$-manifolds of positive (sectional) curvature with an effective isometric  circle action \cite{HK}, isometric actions of tori have been successfully used to obtain classification results on compact Riemannian manifolds with positive or nonnegative curvature and large isometry groups (cf.~\cite{Gr, Gr2, Wi2, Zi}).

The present paper's main contribution is the observation that several results on smooth torus actions on compact smooth simply connected manifolds, and on isometric torus actions on compact simply connected Riemannian manifolds with positive or nonnegative curvature, hold under less restrictive conditions which do not involve the existence of a group action. Indeed, many of these results do not hold because of the presence of a torus action, but rather because the orbit decomposition of the manifold has the structure of a \emph{singular Riemannian foliation} whose leaves are diffeomorphic to flat tori of possibly different dimensions. To make this statement precise, we introduce a special class of singular Riemannian foliations, \emph{B-foliations}, which generalize isometric torus actions on complete Riemannian manifolds. Roughly speaking, a B-foliation $(M,\fol)$ is a partition of a complete Riemannian manifold $M$ into connected closed submanifolds, called the \emph{leaves} of $\fol$, all of which are homeomorphic to some flat manifold and are at a constant distance from each other. More generally, B-foliations are a particular instance of singular Riemannian foliations whose leaves are homeomorphic to some closed aspherical manifold. We will call such singular Riemannian foliations \emph{A-foliations}.

The fact that A-foliations are more general than isometric torus actions is clear for several reasons. On the one hand, the leaves need not be tori. On the other hand, even when the leaves of an A-foliation $(M,\fol)$ on a complete Riemannian manifold $M$ are diffeomorphic to standard tori, the foliation may not be homogeneous, i.e.~there might not be a global torus action on $M$ inducing the given singular Riemannian foliation $\fol$.
This occurs, for example, when the distribution of the tangent spaces of the torus leaves is not orientable. Moreover, our results hold when the leaves carry exotic smooth structures, e.g.~in the case of exotic tori. Nevertheless, we do not know of any non-trivial examples of singular Riemannian foliations whose leaves are exotic tori. It would be interesting to find B-foliations by exotic tori on  simply connected manifolds and, in particular, on spheres.

% MAIN RESULTS ON B-FOLIATIONS

In this paper we focus our attention on A- and B-foliations on compact simply connected Riemannian manifolds. Although every aspherical manifold can appear as the regular leaf of  an A-foliation, our first result implies that the simply connected case is considerably more rigid.

% THEOREM A

\begin{theorem}
\label{T:fund_gp}
Let $(M,\fol)$ be a closed singular Riemannian foliation on a compact, simply connected Riemannian manifold $M$. If $L$ is a regular leaf of $\fol$, then $\pi_1(L)$ is isomorphic to $A\times K_2$, where $A$ is abelian and $K_2$ is a finite $2$ step nilpotent $2$-group. 
\end{theorem}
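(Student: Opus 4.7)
The plan is to decompose $\pi_1(L)$ via the holonomy representation at $L$: the kernel should be abelian (giving the factor $A$) and the image should be a finite $2$-step nilpotent $2$-group (giving $K_2$), after which I promote the resulting extension to a direct product using $\pi_1(M)=1$.

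\smallskip

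\noindent\textbf{Setup.} Pick $p\in L$ and let $D_p$ be a normal slice transverse to $L$. Since $L$ is a regular leaf of the closed SRF $\fol$, the infinitesimal foliation on $D_p$ is trivial (by points), and the slice theorem for singular Riemannian foliations produces a saturated tubular neighborhood $\Tub(L)$ together with a holonomy homomorphism $h\colon\pi_1(L,p)\to \On(\nu_pL)$, whose image $\Omega$ is finite because $\fol$ is closed. Writing $H:=\ker h$, one obtains
\[ 1\to H\to \pi_1(L)\to \Omega\to 1, \]
and the associated holonomy cover $\hat L\to L$ has $\pi_1(\hat L)=H$ and a foliated-trivial tubular neighborhood $\hat L\times D_p$ in the corresponding cover of $\Tub(L)$.

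\smallskip

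\noindent\textbf{Step 1: $H$ is abelian.} Using the product structure of $\Tub(\hat L)$, any loop in $\hat L$ can be translated freely through the slice direction. Combined with $\pi_1(M)=1$, a van Kampen argument on the decomposition $M=\Tub(L)\cup (M\setminus L)$ allows commutators of loops in $\hat L$ to be contracted inside $M$. Careful bookkeeping --- using the product structure to pull those contracting discs back into the tube --- forces $[H,H]=1$.

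\smallskip

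\noindent\textbf{Step 2: structure of $\Omega$ and splitting.} The group $\Omega\subset \On(\nu_pL)$ preserves the stratification of $\nu_pL$ coming from singular leaves emanating from $L$. The codimension-$1$ strata correspond to involutions (reflections) in $\Omega$, and I expect to argue, using $\pi_1(M)=1$, that $\Omega$ is generated by such reflections, that any odd-prime-order element in $\Omega$ would force a nontrivial loop in $M$ via the local structure of its fixed singular stratum (so $\Omega$ is a $2$-group), and that commutators of reflections lie in common fixed subspaces and are forced to be central, giving nilpotency class $\leq 2$. Finally, the $\Omega$-action on $H$ is trivial (again via the product tube over $\hat L$) and the extension splits, yielding $\pi_1(L)\cong H\times \Omega = A\times K_2$.

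\smallskip

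The main obstacle is Step 2: rigorously deducing the $2$-group and $2$-step nilpotent structure of $\Omega$ from the interplay between the infinitesimal SRF geometry on the slice and the global topological input $\pi_1(M)=1$. Ruling out odd-order elements and controlling the commutator structure of reflection-type elements in $\Omega$ look like the most delicate points, as does verifying that the resulting extension actually splits as a direct product rather than a semidirect one.
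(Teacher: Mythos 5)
Your decomposition is built on the wrong exact sequence: you take the kernel/image of the linear holonomy representation $h\colon\pi_1(L,p)\to\On(\nu_pL)$ of $L$ as a leaf of the regular Riemannian foliation, and claim the kernel $H$ is abelian while the image $\Omega$ carries all the non-abelian $2$-group structure. This fails already in the paper's own motivating example, the cohomogeneity one $\SO(3)$-action on $\sphere^4$: the principal leaf is the flag manifold $\SO(3)/(\ZZ_2\times\ZZ_2)$ with $\pi_1(L)\cong Q$, the quaternion group of order $8$. There the foliation has codimension $1$, so $\On(\nu_pL)=\On(1)$, and in fact the restriction of $\fol$ to the regular stratum is a fiber bundle over an open interval, so the holonomy image $\Omega$ is trivial and your kernel $H$ is all of $Q$ --- which is not abelian, contradicting your Step 1. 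The point is that the non-commutativity of $\pi_1(L)$ is invisible to the holonomy of the regular foliation; it is created by the \emph{singular} strata of codimension two (in the example, the two Veronese $\RP^2$'s), via the non-orientability of the circle bundles $\sphere^1\to L\to L_{\mathrm{sing}}$ over singular leaves. Your Step 1 argument (``a van Kampen argument \dots forces $[H,H]=1$'') cannot be repaired, and Step 2's claims about $\Omega$ being generated by reflections, having no odd-order elements, and the extension splitting are exactly the points you flag as unresolved; none of them is supplied by the geometry you set up.

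For comparison, the paper's proof uses a different pair of subgroups generating $\pi_1(L_0)$. First, it reduces to $M=M_0\cup\bigcup_i\Sigma_i$ with $\Sigma_i$ the codimension-two strata, and lets $K\In\pi_1(L_0)$ be generated by the classes $k_i$ of circle fibers of the bundles $L_{p_i}\to L_{\ol{\P}_i(p_i)}$ over singular leaves in $\Sigma_i$; simple connectivity of $M$ (a transversality/disk argument) together with Lytchak's vanishing $\pi_1^{\mathrm{orb}}(M_0/\fol)=1$ and the Molino--Haefliger fibration $L_0\to M_0\to B$ shows $K$ surjects onto $\pi_1(M_0)$. Second, the relation $k_ik_jk_i^{-1}=k_j^{-1}$ forced by non-orientability of those circle bundles yields the splitting $K=K_1\times K_2$ with $K_2$ a finite $2$-step nilpotent $2$-group. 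Third, the remaining generators, namely the image $H=\partial(\pi_2(B))$ in the homotopy sequence of the fibration, are shown to be \emph{central} via the holonomy action of the loop space $\Omega B$. So the abelian factor $A$ comes from $K_1$ together with this central $H$, not from a holonomy kernel, and the $2$-group comes from fiber classes around codimension-two strata, not from the linear holonomy image. You would need to rebuild your argument along these lines (or find genuinely new inputs for your Steps 1 and 2); as written, the proposal does not prove the theorem.
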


In particular, we have the following corollary:

% COROLLARY B

\begin{corollary}
\label{C:torus_leaves}Let $(M,\fol)$ be an A-foliation on a compact Riemannian manifold $M$. If $M$ is  simply connected, then the regular leaves are homeomorphic to tori.
\end{corollary}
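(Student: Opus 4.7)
The plan is to combine Theorem \ref{T:fund_gp} with the classical fact that closed aspherical manifolds have torsion-free fundamental group, and then invoke topological rigidity of tori.

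First I would apply Theorem \ref{T:fund_gp} to a regular leaf $L$ of $(M,\fol)$, which gives $\pi_1(L)\cong A\times K_2$ with $A$ abelian and $K_2$ a finite $2$-group. Since the foliation is an A-foliation, $L$ is homeomorphic to a closed aspherical manifold, so $L$ is a $K(\pi_1(L),1)$. A standard cohomological argument shows that a closed aspherical manifold has torsion-free fundamental group: if $G\le \pi_1(L)$ were a nontrivial finite subgroup, then the associated cover $\widetilde{L}/G$ would be a $K(G,1)$-space that is also a manifold, forcing the group cohomology $H^*(G;\ZZ/2)$ to vanish above $\dim L$, contradicting the well-known fact that cohomology of nontrivial finite groups is nonzero in infinitely many degrees. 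Consequently $K_2$ is trivial and the torsion subgroup of $A$ vanishes.

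Next, because $L$ is a closed manifold its fundamental group is finitely generated, so $A$ is a finitely generated torsion-free abelian group, hence $A\cong\ZZ^n$ for some $n\ge 0$. Since the cohomological dimension of $\ZZ^n$ equals $n$, we must have $\dim L=n$, and $L$ is a closed aspherical $n$-manifold with $\pi_1(L)\cong\ZZ^n$, hence homotopy equivalent to the standard $n$-torus $T^n$.

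Finally, to upgrade homotopy equivalence to homeomorphism I would invoke topological rigidity for tori: every closed aspherical manifold homotopy equivalent to $T^n$ is homeomorphic to $T^n$. For $n\ge 5$ this is due to Hsiang--Wall and Wall, for $n=4$ it follows from Freedman--Quinn surgery together with the Farrell--Hsiang computation of the relevant structure set, and for $n\le 3$ it is classical (Waldhausen or geometrization). The main obstacle is really the invocation of this rigidity statement in the few low dimensions where it is less elementary, though for the $4$- and $5$-manifold applications of this paper only $n\le 4$ is needed. I would therefore cite the relevant rigidity results for tori and conclude that $L$ is homeomorphic to $T^n$.
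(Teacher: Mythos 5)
Your proof is correct and follows essentially the same route the paper intends: Theorem~\ref{T:fund_gp} plus torsion-freeness of fundamental groups of closed aspherical manifolds forces $\pi_1(L)\cong\ZZ^n$, and the homeomorphism to $T^n$ is exactly the topological rigidity the paper invokes in Remark~\ref{R:Borel_Conj} (the Borel conjecture for flat manifolds, via \cite{Fa,BBBP}). The only difference is that you spell out the low-dimensional rigidity references explicitly, which the paper subsumes in that remark.
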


Observe that $2$ step nilpotent $2$-groups already appear as fundamental groups of regular leaves of codimension one singular Riemannian foliations and hence cannot be  avoided in the statement of Theorem~A (cf.~\cite[Table~1.4]{GH}). This occurs, for example, in the cohomogeneity one $\SO(3)$ action on $\sphere^4$.

With Theorem~\ref{T:fund_gp} and Corollary~\ref{C:torus_leaves} in place, we extend to A- and B-foliations several basic results on smooth effective torus actions on smooth compact manifolds (cf.~\cite{Ko,OR, Oh, KMP}). We prove:

% THEOREM C

\begin{theorem}
\label{T:Kobayashi}
Let $(M,\fol)$ be a B-foliation on a compact Riemannian manifold $M$ and let $\Sigma_0\In M$ denote the stratum of $0$-dimensional leaves. Then $\chi(\Sigma_0)=\chi(M)$.
\end{theorem}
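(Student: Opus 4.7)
The plan is to stratify $M$ by leaf dimension and show that each positive-dimensional stratum contributes zero to the Euler characteristic, so that only $\Sigma_0$ survives. First I would write $M=\Sigma_0\sqcup\Sigma_1\sqcup\cdots\sqcup\Sigma_k$, where $\Sigma_d$ is the union of all leaves of $\fol$ of dimension $d$. By lower semicontinuity of leaf dimension in a singular Riemannian foliation, $\Sigma_0$ is closed (and hence compact) in $M$, while each $\Sigma_d$ is a smooth submanifold on which the restriction $\fol|_{\Sigma_d}$ is a regular Riemannian foliation whose leaves are exactly the leaves of $\fol$ contained in $\Sigma_d$. In particular these leaves are compact (as closed subsets of the compact $M$) and, by the B-foliation hypothesis, homeomorphic to $d$-dimensional Bieberbach manifolds.

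Next I would observe that a positive-dimensional Bieberbach manifold has vanishing Euler characteristic, since it is finitely covered by a flat torus $T^d$ with $\chi(T^d)=0$. The heart of the argument is then to show $\chi_c(\Sigma_d)=0$ for every $d\geq 1$. For this I would exploit that $\fol|_{\Sigma_d}$ is a regular Riemannian foliation with compact leaves: by Reeb stability (or Molino's structure theorem), each leaf $L\In\Sigma_d$ admits a saturated tubular neighborhood modeled on $(\tilde L\times D^k)/\mathrm{Hol}(L)$, where $\tilde L\to L$ is the finite holonomy cover. Each such tube deformation retracts onto $L$ and therefore has vanishing Euler characteristic. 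A Mayer--Vietoris argument on a good cover of $\Sigma_d$ by such saturated tubes---equivalently, the multiplicativity of $\chi_c$ for Seifert-type fibrations with vanishing generic-fiber Euler characteristic---then yields $\chi_c(\Sigma_d)=0$.

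Combining these steps via the additivity of Euler characteristic with compact supports along the locally closed stratification of $M$ gives
\[
\chi(M)=\chi_c(\Sigma_0)+\sum_{d\geq 1}\chi_c(\Sigma_d)=\chi(\Sigma_0)+0=\chi(\Sigma_0),
\]
as claimed.

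The main obstacle I anticipate is the vanishing $\chi_c(\Sigma_d)=0$: the stratum $\Sigma_d$ is generally non-compact and $\fol|_{\Sigma_d}$ is only a locally trivial fibration up to finite holonomy, so some care is required with the Mayer--Vietoris bookkeeping or the relevant Leray spectral sequence. An alternative route---closer in spirit to Kobayashi's original argument for torus actions---would be to build a global smooth vector field on $M$ tangent to $\fol$ and vanishing exactly on $\Sigma_0$, by patching generic Killing fields coming from the identity component of the isometry group of each Bieberbach leaf, and then invoke a Poincar\'e--Hopf computation; however, the patching across strata of different leaf dimensions seems technically more involved than the stratified Euler-characteristic count sketched above.
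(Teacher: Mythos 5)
Your strategy is genuinely different from the paper's: you stratify $M$ by leaf dimension and try to kill each positive-dimensional stratum using the compactly supported Euler characteristic, whereas the paper covers $M$ by just two saturated pieces, a tube $W=B_\epsilon(\Sigma_0)$ and the complement $V$ of a smaller tube, and only ever computes ordinary Euler characteristics of \emph{compact} saturated objects: it shows $\chi(M')=0$ for the compact boundary $M'=\partial B_{\epsilon/2}(\Sigma_0)$ by pulling back a \emph{finite} good cover of the leaf space $M'/\fol$ (Wolak), where every finite intersection of the pulled-back cover retracts onto a positive-dimensional Bieberbach leaf, and then gets $\chi(V)=0$ by doubling $\overline V$ along $M'$. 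That design is not an accident: it avoids exactly the non-compact strata that your argument must confront head-on.

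As written, your key step $\chi_c(\Sigma_d)=0$ for $d\geq 1$ has a genuine gap. First, the sentence ``each such tube deformation retracts onto $L$ and therefore has vanishing Euler characteristic'' proves the wrong thing: $\chi_c$ is not homotopy invariant ($\chi_c(\RR)=-1$), so retraction onto $L$ is irrelevant; what saves the tube itself is the explicit model $(\tilde L\times D^k)/\Gamma$ with $\Gamma$ finite acting freely, giving $\chi_c=\chi(\tilde L)(-1)^k/|\Gamma|=0$. Second, the Mayer--Vietoris bookkeeping does not close: a non-compact stratum $\Sigma_d$ need not admit a \emph{finite} cover by saturated tubes, and intersections of tubes are saturated open sets that are not tubes, so their vanishing is precisely what you are trying to prove and no induction is set up. Third, the fallback ``multiplicativity of $\chi_c$ for Seifert-type fibrations with $\chi(\mathrm{fiber})=0$'' is plausible but not free: the projection $\Sigma_d\to\Sigma_d/\fol$ can fail to be proper near the frontier of the stratum, so you would need to decompose the (possibly non-compact) leaf space of the stratum into finitely many locally closed pieces over which the projection is an honest bundle with compact fiber of zero Euler characteristic, using Reeb/Molino stability inside the stratum together with finiteness of the canonical stratification of the compact $M$, and only then apply additivity plus bundle multiplicativity of $\chi_c$. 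Your overall skeleton (additivity over the stratification, $\chi_c(\Sigma_0)=\chi(\Sigma_0)$ since $\Sigma_0$ is closed, vanishing elsewhere because positive-dimensional Bieberbach manifolds have $\chi=0$) can be completed along these lines, but the completion is the actual content of the proof and is missing; the paper's two-piece cover plus doubling trick is the shortcut that makes all of it unnecessary.
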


% THEOREM D

\begin{theorem}
\label{T:CODIM_1_BFOL_SC}
The only codimension $1$ A-foliations on compact, simply connected Riemannian manifolds are the homogeneous singular Riemannian foliations $(\sphere^2,\sphere^1)$ and  $(\sphere^3,T^2)$.
\end{theorem}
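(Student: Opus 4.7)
The plan is to analyze the leaf space of $\fol$, use the slice theorem together with Corollary~\ref{C:torus_leaves} to control the local structure near each singular leaf, and then match the resulting topological data with the two standard homogeneous examples.

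First, since $\fol$ has codimension one, the leaf space $M/\fol$ is a compact one-dimensional Alexandrov space, hence homeomorphic to either $\sphere^1$ or $[0,1]$. If $M/\fol\cong\sphere^1$ then $\fol$ is regular, and the projection $M\to\sphere^1$ is a fiber bundle whose homotopy exact sequence surjects $\pi_1(M)$ onto $\pi_1(\sphere^1)=\ZZ$, contradicting simple connectedness. Thus $M/\fol\cong[0,1]$, with exactly two singular leaves $B_0,B_1$ at its endpoints, and Corollary~\ref{C:torus_leaves} identifies the regular leaves with tori $T^n$, where $n=\dim M-1$.

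Next, by the slice theorem, a tubular neighborhood of $B_i$ is a disk bundle $D^{k_i+1}\to\Tub(B_i)\to B_i$ with fiber codimension $k_i+1=\codim B_i\geq 2$, on which the slice foliation is by concentric spheres about the origin. Thus the regular leaf $T^n$ is the total space of a sphere bundle $\sphere^{k_i}\to T^n\to B_i$. If $k_i\geq 2$, the fiber is simply connected, so $\pi_1(B_i)\cong\ZZ^n$; asphericity of $B_i$ then forces $\dim B_i\geq\mathrm{cd}(\ZZ^n)=n$, which is incompatible with $\dim B_i=n-k_i<n$ unless $B_i$ is a point, in which case $T^n=\sphere^n$ and therefore $n=1$. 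If $k_i=1$, combining $\pi_2(B_i)=0$ with the long exact sequence yields $\pi_1(B_i)\cong\ZZ^{n-1}$; then $B_i$ is a closed aspherical manifold whose abelian fundamental group has rank equal to its dimension, so $B_i\cong T^{n-1}$.

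If $n=1$, both $B_i$ are points and $M=D^2\cup_{\sphere^1}D^2\cong\sphere^2$, giving $(\sphere^2,\sphere^1)$. If $n\geq 2$, then $B_0\cong B_1\cong T^{n-1}$ and $M$ is the union of two $D^2$-bundles over $T^{n-1}$ glued along their common boundary $T^n$. Seifert--van Kampen yields $\pi_1(M)\cong\ZZ^n/\langle a_0,a_1\rangle$, where $a_i\in\pi_1(T^n)=\ZZ^n$ is the class of the fiber circle of the $i$-th $\sphere^1$-bundle; simple connectedness forces $a_0,a_1$ to generate $\ZZ^n$, so $n\leq 2$ and hence $n=2$. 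Then $M$ is a simply connected closed three-manifold, so $M\cong\sphere^3$ by Perelman's theorem, and $\fol$ matches the cohomogeneity-one $T^2$-foliation $(\sphere^3,T^2)$.

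The main obstacle is the local analysis around each singular leaf: extracting from the slice theorem that the induced codimension one SRF on the normal disk is by concentric spheres, and leveraging the resulting sphere-bundle structure together with asphericity to pin down the topology of $B_i$. A subsidiary point is verifying, once $M$ is identified topologically, that the gluing of the two tubular neighborhoods realizes the standard homogeneous foliation rather than a non-standard refoliation; this should follow from the rigidity of foliated self-diffeomorphisms of $(T^n,\fol|_{T^n})$ compatible with the slice structures.
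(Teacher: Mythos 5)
Your argument is essentially correct, but it reaches the dimension bound by a genuinely different route than the paper. The paper first shows the foliation cannot be regular (Lemma~\ref{L:COD1_SING}, via Molino), so $M^*\cong[-1,1]$ with at most two codimension-two strata, and then invokes Corollary~\ref{C:REF_COR} (which rests on the Molino bundle and Haefliger classifying space machinery behind Theorem~\ref{T:fund_gp}) to conclude that $\pi_1(L_0)$ has at most two generators; combined with Corollary~\ref{C:torus_leaves} this gives $L_0\in\{\sphere^1,T^2\}$, hence $\dim M\le 3$ and $M\cong\sphere^2$ or $\sphere^3$. You instead use only Corollary~\ref{C:torus_leaves} together with a hands-on analysis of the tubes around the two boundary leaves: the infinitesimal foliation at a boundary leaf is by concentric spheres (this follows since the space of directions at an endpoint of the interval $M^*$ is a point, so the normal sphere is a single infinitesimal leaf), giving sphere bundles $\sphere^{k_i}\to T^n\to B_i$; asphericity and a cohomological-dimension count force $k_i=1$ and $B_i\simeq T^{n-1}$ (or points when $n=1$), and van Kampen on the double disk bundle decomposition forces $n\le 2$. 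This trades the paper's global Corollary~\ref{C:REF_COR} for local slice analysis plus the A-foliation hypothesis on the \emph{singular} leaves, which the paper's route uses less directly; both then finish with the same double-disk-bundle rigidity, where the paper argues via uniqueness up to isotopy of the gluing map of two solid tori in $\sphere^3$ (and Theorem~\ref{T:HOMOGENEITY} in the $\sphere^2$ case), which is sharper than your appeal to ``rigidity of foliated self-diffeomorphisms.''

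Two loose ends you should close. First, when $M^*\cong[0,1]$ you assert the endpoint leaves are singular, but a priori they could be exceptional leaves of full dimension with $\ZZ_2$-holonomy (the case $k_i=0$, which your case split omits); this is excluded on simply connected $M$ by Lytchak's results \cite{Ly10} (no exceptional leaves once the foliation is singular, equivalently $\pi_1^{\mathrm{orb}}(M_0/\fol)=1$), or directly, since otherwise $\pi_1(M)$ would be an amalgam of two groups over an index-two subgroup $\ZZ^n$, hence nontrivial. Second, in the $k_i=1$ case the exact sequence only gives $\pi_1(B_i)\cong\ZZ^n/\langle a_i\rangle$, which is $\ZZ^{n-1}$ only because $\pi_1(B_i)$ is torsion free (asphericity of $B_i$), forcing $a_i$ to be primitive; this should be said, although your final van Kampen step in fact only needs $\pi_1(M)\cong\ZZ^n/\langle a_0,a_1\rangle$. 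With these points added, the proof goes through.
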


% THEOREM E

\begin{theorem}\label{T:BFOL_CODIM2}
Let $(M^{n+2},\fol^n)$ be a codimension $2$ A-foliation on a compact, simply connected Riemannian manifold $M^{n+2}$ with $n\geq 1$. Then either $M=\sphere^3$ and $\fol$ is given by a weighted Hopf action, or the following hold:
\\
\begin{enumerate}

\item The leaf space $B=M/\fol$ is homeomorphic to a $2$-disk, the interior of $B$ is smooth, and the boundary $\partial B$ consists of at least $n$ totally geodesic segments meeting at an angle of $\pi/2$. \\

\item  Let $L_0$ be a generic leaf and $L_1$ a singular leaf. Then there is a submersion $L_0\to L_1$, with fiber $\mathbb{S}^1$ if $L_1$ belongs to a geodesic in $\partial B$, or with fiber $T^2$, if $L_1$ belongs to a vertex of $\partial B$.
\end{enumerate}
\end{theorem}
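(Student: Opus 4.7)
The plan is to analyze the leaf space $B=M/\fol$ via the infinitesimal/slice theory of singular Riemannian foliations, using Corollary~\ref{C:torus_leaves} to identify regular leaves with tori $T^n$ and Theorem~\ref{T:CODIM_1_BFOL_SC} to pin down the local models at singular leaves, and then split into the closed vs.~boundary cases for $B$.

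First I would stratify $B$ by leaf dimension. At a singular leaf $L$ of dimension $k$, the slice theorem gives an infinitesimal foliation on the normal sphere $\sphere^{n+1-k}$ whose regular leaves have dimension $n-k$, hence a codimension~$1$ A-foliation on a simply connected sphere. Only $k=n-1$ and $k=n-2$ are compatible with this; Theorem~\ref{T:CODIM_1_BFOL_SC} then forces the two slice models to be the homogeneous foliations $(\sphere^2,\sphere^1)$ and $(\sphere^3,T^2)$ respectively. Since $\sphere^2/\sphere^1$ is a geodesic segment and $\sphere^3/T^2$ is a quarter circle, the local model for $B$ is a half-plane along $1$-dimensional strata and a quadrant at $0$-dimensional strata. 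Thus $B$ is a topological $2$-manifold with corners whose boundary, if nonempty, is a union of geodesic arcs meeting at right angles, and whose interior is a smooth Riemannian $2$-manifold. This already yields the geometric part of (1) and identifies the fibers appearing in (2).

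Next I would split into cases according to whether $\partial B=\emptyset$. If $B$ is closed, then $\fol$ is a regular Riemannian foliation and $M\to B$ is a $T^n$-fiber bundle over a closed surface. The homotopy long exact sequence, together with $\pi_1(M)=0$, forces $\pi_2(B)\to\pi_1(T^n)=\ZZ^n$ to be surjective; since $\pi_2$ of any closed surface other than $\sphere^2$ is trivial, we must have $B=\sphere^2$ and $n=1$, and a simply connected $\sphere^1$-bundle over $\sphere^2$ is the Hopf bundle $\sphere^3$. Allowing the leaf space to carry the orbifold structure coming from finite holonomy in the slices along exceptional leaves gives precisely the weighted Hopf foliations, which is the first alternative of the theorem. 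If instead $\partial B\neq\emptyset$, simple connectivity of $M$ together with van Kampen applied to the tubular decomposition of $M$ over $B$ (a central tube over $\inte B$ glued to boundary tubes) shows that $B$ is simply connected as an orbifold, and a compact $2$-manifold with boundary and trivial orbifold $\pi_1$ is a $2$-disk, completing (1).

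For (2), along a $1$-dimensional stratum the holonomy tube description of a neighborhood of the singular leaf $L_1$ realises nearby regular leaves $L_0$ as the total space of a fibration over $L_1$ whose fibers are the regular leaves of the infinitesimal Hopf foliation on $\sphere^2$, i.e.\ circles; the analogous picture at a vertex gives a $T^2$-bundle $L_0\to L_1$ from the infinitesimal $(\sphere^3,T^2)$. Finally, to see $\partial B$ has at least $n$ edges, note that each edge $e_i$ carries a distinguished ``vanishing cycle'' $\gamma_i\in\pi_1(L_0)=\ZZ^n$, namely the kernel of the induced map to $\pi_1(L_1)$ for a singular leaf $L_1$ over $e_i$. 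Van Kampen applied to the same decomposition identifies $\pi_1(M)$ with the quotient of $\ZZ^n$ by the subgroup generated by the $\gamma_i$, and $\pi_1(M)=1$ forces the $\gamma_i$ to generate $\ZZ^n$, requiring at least $n$ of them.

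The main obstacle I anticipate is the orbifold $\pi_1$ argument showing $B\cong D^2$ in the boundary case: one has to set up carefully how holonomy of the infinitesimal foliations at corners contributes to $\pi_1^{\mathrm{orb}}(B)$ and then relate it to $\pi_1(M)$ via a Seifert--van~Kampen computation that is sensitive to corners, since the standard fibration long exact sequence is not available once singular strata are present.
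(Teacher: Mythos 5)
Your skeleton does match the paper's: identify the infinitesimal foliations at singular leaves as codimension-one A-foliations and pin them down via Theorem~\ref{T:CODIM_1_BFOL_SC} and Corollary~\ref{C:torus_leaves}, split into the regular and singular cases, and count edges by a fundamental-group argument. The genuine gap is that you never rule out exceptional leaves or nontrivial holonomy, and several conclusions you treat as automatic depend on exactly that. In the singular case, a regular leaf with nontrivial finite holonomy would create an orbifold point in the interior of $B$, so ``the interior of $B$ is smooth'' does not follow from your local models, which only examine lower-dimensional leaves; and nontrivial holonomy at a singular leaf means the space of directions at a boundary point of $B$ is a further quotient of the infinitesimal leaf space (so the corner angle need not be exactly $\pi/2$), while in fibration~\eqref{E:Inf_fibration} the base is a priori only a finite cover $\bar{L}_1\to L_1$ and the fiber only a union of infinitesimal leaves, so the clean $\sphere^1$- and $T^2$-fibers in (2) are not yet justified. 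The paper gets all of this from Lytchak \cite{Ly10} (no exceptional leaves once the foliation is singular, and $\pi_1^{\mathrm{orb}}(M_0/\fol)=1$, whence trivial holonomy via Remark~\ref{R:Space_Dir}), and it also invokes \cite{Ra} to know the infinitesimal foliations are \emph{isometric} to the homogeneous models on round spheres, so the quotient intervals have length exactly $\pi$ and $\pi/2$; Theorem~\ref{T:CODIM_1_BFOL_SC} alone is only a foliated-diffeomorphism statement and does not give the metric claims. Your van Kampen program might recover part of this, but you flag it yourself as unresolved, and without it (1)--(2) as stated are not established.

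Two further points of comparison. In the regular case you assert that $M\to B$ is a genuine $T^n$-bundle over a closed surface; this fails precisely when exceptional leaves are present (a regular closed foliation is only a Seifert-type fibration over an orbifold), and your patch ``allowing the orbifold structure gives the weighted Hopf foliations'' is where the actual content lies. The paper instead uses the Haefliger--Molino classifying-space fibration $L\to M_0\to B$, which exists up to homotopy regardless of exceptional leaves, together with a rational homotopy comparison of $B$ with $M/\fol\cong\sphere^2$ to force $L=\sphere^1$ and $M=\sphere^3$, and then Theorem~\ref{T:HOMOGENEITY} plus \cite{Or} to get the weighted Hopf action. Finally, the disk statement has a much shorter proof than your corner-sensitive van Kampen plan: $\pi_1(M)$ surjects onto the fundamental group of the leaf space (Bredon-type argument, \cite{Br}), so the underlying space of $B$ is a simply connected compact surface with boundary, hence a disk; and ``at least $n$ edges'' is exactly Corollary~\ref{C:REF_COR}, whose mechanism (loops around codimension-two strata generate $\pi_1(L_0)$ once $\pi_2^{\mathrm{orb}}(M_0/\fol)=0$) is the careful version of your vanishing-cycle count, so that part of your outline is essentially the paper's argument.
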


% COMMENTS ON F-STRUCTURES

Another generalization of isometric torus actions are the so-called  \emph{F-structures}, introduced by Cheeger and Gromov \cite{ChGr1,ChGr2}. These structures are, roughly speaking,  generalized local torus actions and play a central role in the Cheeger-Fukaya-Gromov theory of collapsed Riemannian manifolds with bounded sectional curvature (cf.~\cite{CFG,Fu}). The so-called  \emph{pure} F-structures (see \cite{ChGr1}) give rise to B-foliations with leaves diffeomorphic to flat manifolds. Recall that, by work of Cheeger, Fukaya and Gromov, there exists a constant $\epsilon(n, d) > 0$, such that any compact Riemannian manifold $M^n$ with curvature $|\sec(M^n)| \leq  1$, $\diam(M^n) < d$ and $\mathrm{vol}(M^n) < \epsilon(n, d)$ admits a pure F-structure (see \cite{ChRo} and references therein).
 Therefore, $M^n$ is B-foliated. 

Although B-foliations resemble F-structures, the two  concepts are independent. A B-foliation on a Riemannian manifold does not necessarily correspond to an F-structure since, for instance, B-foliations with exotic torus leaves cannot be generated by F-structures. On the other hand, certain F-structures among those that are not pure do not generate a B-foliation. 
\\

% GEOMETRIC APPLICATIONS

As an application of our results, we extend work in \cite{HK,Kl,SY,GGS2} on positively and nonnegatively curved compact, simply connected Riemannian manifolds with large effective isometric to\-rus actions  to the case of A-foliations. Recall that a Riemannian manifold $(M,g)$ is said to have \emph{quasi-positive curvature}  if $(M,g)$ has nonnegative (sectional) curvature and a point with strictly positive curvature. 

% THEOREM F

\begin{theorem}\label{T:QPC_D4_CODIM2}
Let $(M^n,g)$ be a compact, simply connected Riemannian $n$-manifold with quasi-positive curvature supporting a codimension $2$ A-folia\-tion.
\\
\begin{enumerate}
	\item If $n=4$, then  $M^4$ is diffeomorphic to $\sphere^4$ or $\CP^2$.\\
	\item If $n=5$, then  $M^5$ is diffeomorphic to $\sphere^5$.
\end{enumerate}
\end{theorem}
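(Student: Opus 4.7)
Plan: By Corollary~\ref{C:torus_leaves}, the regular leaves of the codimension-two A-foliation $\fol$ on $M^n$ are homeomorphic to tori $T^{n-2}$, and since tori are flat this is in fact a B-foliation. Theorem~\ref{T:BFOL_CODIM2} then applies; the exceptional case $M=\sphere^3$ is ruled out by $n\geq 4$, so the leaf space $B=M/\fol$ is a 2-disk whose boundary is a concatenation of at least $n-2$ geodesic segments meeting at angle $\pi/2$. The submersion data in Theorem~\ref{T:BFOL_CODIM2} further identify the singular leaves over each open boundary edge as copies of $T^{n-3}$ and the leaves over each vertex as copies of $T^{n-4}$. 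For $n=4$ the vertex leaves are isolated points and for $n=5$ they are circles; in both cases this reproduces exactly the incidence pattern of the orbit decomposition of a cohomogeneity-two isometric $T^{n-2}$-action on a simply connected manifold.

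The main step is to pass from this \emph{a priori} foliated picture to an equivariant one. I would reconstruct $M$ from the leaf-space data by viewing the regular stratum as a principal $T^{n-2}$-bundle over the open disk $\inte(B)$ and gluing in the singular strata via the submersions prescribed by Theorem~\ref{T:BFOL_CODIM2}; simple-connectedness of $M$ then kills any obstruction to extending the fiberwise torus action globally, realising $\fol$ as the orbit foliation of an effective isometric $T^{n-2}$-action on $M$. This reduction is the main technical hurdle, since a general A-foliation by tori need not be homogeneous, and one must use the rigid nature of the vertex leaves -- isolated fixed-type points in dimension 4, exceptional circles in dimension 5 -- to anchor the would-be action. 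If such a direct promotion is not available, one would instead verify that the combinatorial and metric data extracted from Theorem~\ref{T:BFOL_CODIM2} are precisely what the classical equivariant proofs use, and run those proofs in the foliated setting.

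Once the equivariant picture is in hand, the theorem follows from the existing classifications of compact simply connected quasi-positively curved 4- and 5-manifolds with large isometric torus symmetry. In dimension~4, the work of Hsiang--Kleiner \cite{HK}, Kleiner \cite{Kl} and the refinements of Galaz-Garcia--Searle \cite{GGS2} for isometric $T^2$-actions force $M^4$ to be diffeomorphic to $\sphere^4$ or $\CP^2$; in dimension~5, the analogous results of Searle--Yang \cite{SY} together with \cite{GGS2} for simply connected quasi-positively curved 5-manifolds of maximal symmetry rank force $M^5$ to be diffeomorphic to $\sphere^5$.
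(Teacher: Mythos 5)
Your reduction to an isometric torus action is a genuine gap, not a technical hurdle you can wave away with simple-connectedness. The paper's introduction points out that an A-foliation with standard torus leaves need not be homogeneous (e.g.\ when the tangent distribution along the leaves is non-orientable), and, more tellingly, the remark immediately after Theorem~\ref{T:NNC_D4_CODIM2} states that the authors themselves do not know whether a codimension $2$ A-foliation on a nonnegatively curved simply connected $4$-manifold is induced by a smooth torus action -- that is precisely why part of that theorem is only a homeomorphism classification. The only homogeneity result available in the paper is Theorem~\ref{T:HOMOGENEITY}, which covers foliations by circles (codimension $2$ only when $n=3$), and even there the resulting action is smooth, not isometric. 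So the assertion that ``simple-connectedness kills any obstruction to extending the fiberwise torus action globally'' has no justification, and your subsequent appeal to the equivariant classifications of \cite{HK,Kl,SY,GGS2} (which moreover concern positive or nonnegative, not quasi-positive, curvature, and in part give only homeomorphism conclusions) cannot be made without it. Your fallback sentence -- ``run those proofs in the foliated setting'' -- is exactly what has to be done, but you do not do it, and that is where all the substance lies.

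For contrast, the paper never promotes the foliation to an action. For $n=4$ it applies Gauss--Bonnet to the quasi-positively curved leaf space to show there are $m=2$ or $3$ vertices, decomposes $M$ accordingly as a union of a $4$-ball around a point leaf and either another $4$-ball or a disk bundle over the $2$-sphere stratum, and uses Hatcher's proof of the Smale conjecture to identify the gluing and conclude $M\simeq\sphere^4$ or $\CP^2$ up to diffeomorphism. For $n=5$ it shows the leaf space has exactly three vertices, produces a double disk bundle decomposition over the preimages $X_-\simeq\sphere^3$ and $X_+\simeq\sphere^1$ of an edge and the opposite vertex, computes $H_2(M,\ZZ)=0$ by Mayer--Vietoris, and invokes Barden--Smale. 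Note also that your identification of the singular leaves over edges as tori $T^{n-3}$ is not automatic for $n=5$: a priori these $2$-dimensional aspherical leaves could be Klein bottles, and ruling that out requires an argument (compare the lemma in the proof of Theorem~\ref{T:NNC_D4_CODIM2}(2)); so even the ``incidence pattern of a cohomogeneity-two $T^{n-2}$-action'' you want to match is not free.
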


% THEOREM G

\begin{theorem}
\label{T:NNC_D4_CODIM2}
Let $(M^n,g)$ be a compact,  simply connected Riemannian $n$-manifold with nonnegative curvature and a codimension $2$ A-foliation.
\\
\begin{enumerate}
	\item If $n=4$, then  $M^4$ is diffeomorphic to $\sphere^4$, $\CP^2$, $\CP^2\#\pm\CP^2$ or $\sphere^2\times\sphere^2$.\\
	\item If $n=5$, then $M^5$ is diffeomorphic to $\sphere^5$ or to one of the two $\sphere^3$-bundles over $\sphere^2$.
\end{enumerate}
\end{theorem}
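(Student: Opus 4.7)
The plan is to combine Theorem~E with a Gauss-Bonnet argument on the leaf space and a slice-model reconstruction of $M$ from the boundary combinatorics of $B=M/\fol$. Since $\dim M\in\{4,5\}$ excludes the exceptional case $M=\sphere^3$ of Theorem~E, $B$ is homeomorphic to a $2$-disk whose boundary $\partial B$ is a concatenation of $k$ totally geodesic segments meeting at right angles, with $k\geq 2$ when $\dim M=4$ and $k\geq 3$ when $\dim M=5$. By Corollary~B the regular leaves are tori; edge leaves are obtained from them by collapsing an $\sphere^1$-factor, and vertex leaves by collapsing a $T^2$-factor.

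Nonnegative curvature of $M$ descends to $B$, so $B$ is a $2$-dimensional Alexandrov disk with $\curv\geq 0$ and piecewise geodesic, right-angled boundary. Gauss-Bonnet on $B$ then gives
\[
\int_{B} K\,dA \;+\; k\cdot\tfrac{\pi}{2} \;=\; 2\pi,
\]
so $k\leq 4$. Hence $k\in\{2,3,4\}$ in dimension $4$ and $k\in\{3,4\}$ in dimension $5$.

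In the $4$-dimensional case, each vertex of $\partial B$ carries a single $0$-dimensional leaf, so $\lvert\Sigma_0\rvert=k$ and Theorem~C gives $\chi(M^4)=k$. Using slice-type local models for the A-foliation around each stratum, $M^4$ assembles from a generic piece $T^2\times\mathrm{int}(B)$, edge tubes of type $\sphere^1\times D^3$, and vertex balls $D^4$, glued according to the combinatorics of $\partial B$. This matches the plumbing of the Orlik-Raymond classification of simply connected smooth $4$-manifolds with a cohomogeneity-two torus action, giving $M^4\cong\sphere^4$ if $k=2$, $M^4\cong\CP^2$ if $k=3$, and $M^4$ homeomorphic to one of $\sphere^2\times\sphere^2$ or $\CP^2\#\pm\CP^2$ if $k=4$. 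For $k\leq 3$ the smooth type is forced by the combinatorial data, whereas for $k=4$ only the homeomorphism type is controlled.

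For $M^5$, the vertex leaves are circles, no $0$-dimensional strata appear, and an analogous slice-and-gluing reconstruction assembles $M^5$ from a generic piece $T^3\times\mathrm{int}(B)$, edge tubes $T^2\times D^3$, and vertex tubes $\sphere^1\times D^4$, realizing $M^5$ as a simply connected cohomogeneity-two $T^2$-type manifold. Combining this model with the classification in~\cite{GGS2} of simply connected nonnegatively curved $5$-manifolds with cohomogeneity-two $T^2$-actions yields $M^5\cong\sphere^5$ for $k=3$ and $M^5$ diffeomorphic to one of the two $\sphere^3$-bundles over $\sphere^2$ for $k=4$. I expect the main obstacle to be precisely this last step: since an A-foliation need not be homogeneous, one must verify that each local slice model is equivalent (up to diffeomorphism of the normal tube) to a linear torus model and that the transition data between adjacent pieces match the standard ones, reducing the A-foliation classification to the known torus-action classification.
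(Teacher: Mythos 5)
Your opening moves (Gauss--Bonnet on the leaf space to bound the number of vertices by four, Corollary~\ref{C:torus_leaves} for torus leaves, Theorem~\ref{T:Kobayashi} to get $\chi(M^4)=k$) coincide with the paper's. The problem is the step you yourself flag at the end, and it is not a deferrable technicality but the crux: your identification of $M$ with an Orlik--Raymond plumbing (in dimension $4$) or with a nonnegatively curved cohomogeneity-two torus manifold as in \cite{GGS2} (in dimension $5$; note the relevant torus there is $T^3$, not $T^2$) presupposes that the A-foliation is equivalent to one induced by a smooth torus action, i.e.\ that every slice model is linear \emph{and} that the transition data between adjacent tubes are the standard ones. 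This is exactly what is not known; the paper says explicitly that in the four-vertex $4$-dimensional case the classification is only up to homeomorphism because it is unknown whether the A-foliation comes from a smooth torus action. (If your reduction were available, it would yield a diffeomorphism classification in that case, strictly more than the theorem asserts.) Moreover, your product tubes are not automatic: in dimension $5$ the normal data along a vertex circle could a priori reverse orientation on both circle factors of the infinitesimal foliation $(\sphere^3,T^2)$, which would make the $2$-dimensional singular leaves Klein bottles rather than tori and destroy the trivializations $T^2\times D^3$ and $\sphere^1\times D^4$ you assume; excluding this is the technical heart of the paper's part (2) (the lemma that $P\in\SO(2)\times\SO(2)$, proved by combining the double disk bundle decomposition, \cite[Table 1.4]{GH}, and a Mayer--Vietoris contradiction).

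The paper circumvents the homogeneity question altogether. In dimension $4$ it obtains the homeomorphism statement from $\chi(M)\in\{2,3,4\}$ together with Freedman's classification, and the diffeomorphism statements in the two- and three-vertex cases from the explicit ball/tube double-disk decompositions of the proof of Theorem~\ref{T:QPC_D4_CODIM2} plus Hatcher's theorem $\Diff(\sphere^3)\simeq\On(4)$. In dimension $5$ it reduces to Barden--Smale by computing $H_2(M;\ZZ)$: after the $P\in\SO(2)\times\SO(2)$ lemma, the edge preimages $X_\pm$ are orientable and $X_0\to X_\pm$ are principal circle bundles, $X_-$ is $\sphere^1\times\sphere^2$ or a lens space, and a homotopy exact sequence or Mayer--Vietoris computation gives $H_2(M;\ZZ)=\ZZ$ in the four-vertex case (and $0$ in the three-vertex case). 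To salvage your route you would need a linearization-plus-standard-transition (effectively, homogeneity) theorem for codimension-two A-foliations; without it the appeal to the torus-action classifications does not go through, so as written the proposal has a genuine gap precisely where the paper's new arguments are located.
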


% THEOREM H

\begin{theorem}\label{T:D4_S1FOL}
Let $M$ be a  compact, simply connected Riemannian $4$-manifold with a singular Riemannian foliation by circles. Then the foliation is induced by a smooth circle action and the following hold:
\\
\begin{enumerate}
	\item If $M$ has positive curvature, then $M$ is diffeomorphic to $\sphere^4$ or $\CP^2$.\\
	\item If $M$ has nonnegative curvature, then $M$ is diffeomorphic to $\sphere^4$, $\CP^2$, $\CP^2\#\pm\CP^2$ or $\sphere^2\times\sphere^2$.\\
\end{enumerate}
\end{theorem}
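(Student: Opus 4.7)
The plan is to reduce the statement to the existing classification of isometric circle actions on compact simply connected $4$-manifolds with positive, respectively nonnegative, curvature. Concretely, once one knows that the foliation is induced by a smooth (hence, since $\fol$ is a singular Riemannian foliation, isometric) $\sphere^1$-action, part~(1) is a direct appeal to Hsiang--Kleiner~\cite{HK} and part~(2) follows from the classifications of Kleiner~\cite{Kl} and Searle--Yang~\cite{SY} (cf.~\cite{GGS2}). So the entire burden of the argument lies in the first sentence of the theorem: \emph{every singular Riemannian foliation by circles on a compact simply connected $4$-manifold comes from a smooth $\sphere^1$-action}.

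To produce the action I would begin by understanding the singular stratification. Since regular leaves are $1$-dimensional, every non-regular leaf is $0$-dimensional; the singular set $\Sigma_0\In M$ is therefore a closed union of fixed leaves. Using the slice theorem for closed singular Riemannian foliations, the normal foliation at a component of $\Sigma_0$ is a (singular) Riemannian foliation by circles on a Euclidean disk, which forces each component of $\Sigma_0$ to be a smooth submanifold of even codimension at least $2$ in $M$. In particular $M^*:=M\setminus\Sigma_0$ is open and dense, and the regular foliation $\fol|_{M^*}$ has tangent line bundle $T\fol$ which, being a line bundle on a simply connected complement of a codimension $\geq 2$ submanifold, I would show is orientable (directly when $\codim\Sigma_0\geq 3$; and when $\codim\Sigma_0=2$ by exhibiting a coherent orientation on a tubular neighbourhood using the linear slice representation, so that the orientation descends to $M^*$ after capping off meridians).

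With a global unit vector field $X$ tangent to $\fol$ on $M^*$, the Riemannian-foliation condition implies that the flow $\phi_t$ of $X$ preserves the ambient metric and permutes leaves. Because all leaves are closed circles of uniformly bounded length (the leaves lie at constant distance from one another and $M$ is compact), $\phi_t$ is periodic with a common period and thus defines a free isometric $\sphere^1$-action on $M^*$ whose orbits are the leaves of $\fol|_{M^*}$. To extend the action across $\Sigma_0$ I would invoke the slice theorem again: each component $\Sigma_0^i$ has an equivariant tubular neighbourhood on which $\fol$ restricts to the cone over a singular Riemannian foliation by circles on a sphere of dimension $\leq 3$, and such a foliation is already linear (it comes from the Hopf or weighted Hopf action, which can also be read off from Theorem~\ref{T:BFOL_CODIM2} applied to the link); gluing these linear $\sphere^1$-actions with $\phi_t$ on $M^*$ gives a global smooth isometric $\sphere^1$-action on $M$ whose orbits are exactly the leaves of $\fol$.

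The main obstacle is the extension step: one must verify that the period of $\phi_t$ on $M^*$ matches the period of the linear $\sphere^1$-action given by the slice at each component of $\Sigma_0$, so that the local and global actions are genuinely compatible. This is where the assumption that $M$ is simply connected enters in an essential way, since it rules out the holonomy obstructions that would otherwise force distinct periods on different components of $M^*$. Once this compatibility is established, $\fol$ is the orbit foliation of an isometric circle action and the proof concludes with the cited topological classifications of simply connected $4$-dimensional $\sphere^1$-manifolds with positive~\cite{HK} and nonnegative~\cite{Kl,SY,GGS2} curvature.
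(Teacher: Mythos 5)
There is a genuine gap, and it sits exactly where you put the least weight: the parenthetical claim that the circle action is isometric ``since $\fol$ is a singular Riemannian foliation''. The transnormality condition only says that geodesics orthogonal to leaves stay orthogonal; it does not make the flow of a unit vector field $X$ tangent to $\fol$ metric-preserving, nor periodic with a common period. Already for a bundle-like metric on a circle bundle the leaves are equidistant while their lengths vary, so the unit-speed flow has leaf-dependent periods and is not a Killing flow; one can also arrange the metric along the fibers to be non-invariant under \emph{any} circle action with those orbits. So the step ``$\phi_t$ preserves the ambient metric and is periodic with a common period, hence defines a free isometric $\sphere^1$-action'' fails, and with it the reduction of parts (1) and (2) to the classifications of Hsiang--Kleiner \cite{HK}, Kleiner \cite{Kl} and Searle--Yang \cite{SY}, all of which require the action to be by isometries of the given positively (resp.\ nonnegatively) curved metric. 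Averaging the metric to force invariance is not an option, since it destroys the curvature hypotheses. A secondary problem: even granting an isometric action, \cite{HK} and \cite{SY} give homeomorphism classifications, whereas the theorem asserts diffeomorphism; your proposal never brings in the smooth structure theory of circle actions that supplies this.

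The paper's route avoids both issues. Homogeneity is obtained as in Theorem~\ref{T:HOMOGENEITY}: the regular part of the foliation is orientable because $\pi_1^{\mathrm{orb}}(M_0/\fol)=1$ (Lytchak \cite{Ly10}), an orientable closed one-dimensional Riemannian foliation is the orbit foliation of a smooth circle action, and the action is extended radially over the singular strata --- producing a \emph{smooth}, in general non-isometric, action. Then Fintushel \cite{F1,F2}, Pao \cite{Pa} and Perelman's resolution of the Poincar\'e conjecture give that $M$ is diffeomorphic to a connected sum of copies of $\sphere^4$, $\pm\CP^2$ and $\sphere^2\times\sphere^2$, so it suffices to bound $\chi(M)$ by $3$ (positive curvature) and $4$ (nonnegative curvature). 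These bounds are proved by comparison and soul-type arguments carried out directly in the leaf space $M^*$, which is an Alexandrov space with the inherited lower curvature bound \emph{because $\fol$ is a singular Riemannian foliation} --- no isometric action is needed at any point. If you want to salvage your outline, you should drop the isometry claim, keep the smooth action, and replace the appeal to \cite{HK,Kl,SY,GGS2} by the Fintushel--Pao decomposition together with Euler characteristic estimates on the Alexandrov quotient, as in \cite{HK,SY,GW} transplanted to the leaf space.
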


We call a B-foliation \emph{Euclidean} if its regular leaves are flat with the induced Riemannian metric and define the \emph{Euclidean rank} of a Riemannian manifold $(M,g)$ as the maximum dimension of Euclidean B-foliations on $M$ compatible with the fixed metric $g$. This invariant generalizes the symmetry rank of $(M,g)$ (cf.~\cite{GS01}). It follows from Otsuki's lemma \cite[Lemma~3.3, p. 224]{dC} (cf.~also an argument due to Wilking found in \cite{GG}) that the Euclidean rank of a compact, quasi-positively curved Riemannian $n$-manifold is less than or equal to $\lfloor (n+1)/2\rfloor$. In dimension $n\leq 9$, it is easy to show, following the comparison arguments in the proof of Theorem~\ref{T:NNC_D4_CODIM2}, that the Euclidean rank of a compact, simply connected Riemannian $n$-manifold of nonnegative curvature is bounded above by $\lfloor 2n/3 \rfloor$.
These bounds coincide with the corresponding bounds for the symmetry rank (cf.~\cite{GS01,GGS2}).

\bigskip
Our paper is structured as follows. In Section~\ref{basic} we recall some basic facts on singular Riemannian foliations. In Section~\ref{S:B_fol} we introduce  A- and B-foliations and show that the infinitesimal foliation at any point of a manifold with an A- or a B-foliation is also an A- or a B-foliation, respectively. 
Sections~\ref{S:FUNDGP_LEAVES} through \ref{S:CODIM_2} contain the proofs of Theorems~\ref{T:fund_gp} through \ref{T:BFOL_CODIM2}.
Section~\ref{S:CURV_1} contains the proof of  Theorems~\ref{T:QPC_D4_CODIM2} and~\ref{T:NNC_D4_CODIM2}. 
Finally, in Section~\ref{S:CURV_2} we prove Theorem~\ref{T:D4_S1FOL}.
% ADDED
Throughout our paper we will assume all manifolds to be connected and without boundary, unless stated otherwise.

% ACKNOWLEDGEMENTS

\begin{ack}
The second named author thanks Xiaoyang Chen for some initial conversations. Both authors wish to acknowledge the hospitality of the  Mathematisches Forschungsinstitut Oberwolfach, where the work presented in this paper was initiated. The authors also thank Wolfgang Ziller for comments on a first version of this paper, Alexander Lytchak for suggestions which led to the proofs of Theorems~\ref{T:fund_gp} and \ref{T:fibration-flat}, and the referee for suggesting improvements to our original results.  
\end{ack}

%------------------------------------------------
%	SECTION: PRELIMINARIES
%------------------------------------------------

\section{Preliminaries}
\label{basic}

In this section we collect some background material  on singular Riemannian foliations. We refer the reader to \cite{Mo,ABT} for further results on the theory.

%	SSUBSECTION: SINGULAR RIEMANNIAN FOLIATIONS

\subsection{Singular Riemannian foliations}
\label{SS:SRF} A \emph{transnormal system} $\mathcal{F}$ on a complete Riemannian manifold $M$ is a decomposition of $M$ into smooth, complete, injectively immersed connected submanifolds, called \emph{leaves}, such that every geodesic emanating perpendicularly to one leaf remains perpendicular to all leaves. A transnormal system $\mathcal{F}$ is called a \emph{singular Riemannian foliation}  if there are smooth vector fields $X_i$ on $M$ such that, for each point $p\in M$, the tangent space $T_p L_p$ to the leaf $L_p$ through $p$ is given as the span of the vectors $X_i(p)\in T_pM$. We will call the quotient space $M/\mathcal{F}$ the \emph{leaf space}, and will also denote it by $M^*$. We will let $\pi: M\to M/\fol$ be the leaf projection map. The pair $(M,\fol)$ will denote a singular Riemannian foliation $\fol$ on a complete Riemannian manifold $M$. Slightly abusing notation, we will also refer to the pair $(M,\fol)$ as a singular Riemannian foliation.  

A singular Riemannian foliation $\fol$ will be called \emph{closed} if all its leaves are closed in $M$; the foliation will be called \emph{locally closed} at $x\in M$ if, for some neighborhood $U$ of $x$, the restriction of $\fol$ to $U$ is closed, i.e.~connected components of the intersection of the leaves of $\fol$ with $U$ are closed in $U$. If $\fol$ is locally closed at $x$, then the local quotient $U/\fol$ is a well defined Alexandrov space of curvature locally bounded from below. Similarly, if $\fol$ is closed, then the quotient space $M/\fol$ is an Alexandrov space of curvature locally bounded below.  
% REFEREE SUGGESTiON
We will henceforth only consider closed singular Riemannian foliations

%	SSUBSECTION:  NOTATION GROUP ACTIONS

\subsection{Group actions}
 As group actions will appear throughout our work, let us fix some notation before proceeding. Given a Lie group $G$ acting (on the left) on a smooth manifold $M$,   we denote by $G_p=\{\, g\in G : gp=p\, \}$ the \emph{isotropy group} at $p\in M$ and by $Gp=\{\, gp : g\in G\, \}\simeq G/G_p$ the \emph{orbit} of $p$. The \emph{ineffective kernel} of the action is the subgroup $K=\cap_{p\in M}G_p$. We say that $G$ acts \emph{effectively} on $M$ if $K$ is trivial. The action is  \emph{free} if every isotropy group is trivial. Given a subset $A\subset M$, we will denote its image in $M/G$ under the orbit projection map by $A^*$. When convenient, we will also denote the orbit space $M/G$ by $M^*$.

% EXAMPLE: GROUP ACTION

% ADDED COMPLETE TO THE MANIFOLD
\begin{example}[Isometric Lie group actions] 
Perhaps the most familiar example of a singular Riemannian foliation is the one induced by an (effective) isometric action of a Lie group  $G$ on a complete Riemannian manifold $M$. In this case, the foliation is given by the orbits of the action, and we say that the foliation is a \emph{homogeneous foliation}. If $G$ is compact, then the foliation is closed, and it is locally closed if and only if all the slice representations $G_p\to O(\nu_p(G p))$ have compact image. 
\end{example}

% REMARK

\begin{rem}  
We will sometimes denote a  homogeneous foliation, given by the action of a Lie group $G$, by $(M,G)$, provided the $G$-action is understood.
\end{rem}

%	SSUBSECTION: STRATIFICATION

\subsection{Stratification}
Let $M$ be a complete Riemannian manifold with a closed singular Riemannian foliation $\mathcal{F}$. The \emph{dimension} of $\mathcal{F}$, denoted by $\dim \mathcal{F}$, is the maximal dimension of its leaves. The \emph{codimension} of $\mathcal{F}$ is, by definition, 
\[
\codim(\mathcal{F},M)=\dim M - \dim \mathcal{F}.
\]
For $k\leq \dim \mathcal{F}$, define 
\[
\Sigma_{(k)} =\{\, p\in M : \dim L_p =k\,\}.
\]
Every connected component $C$ of the set $\Sigma_{(k)}$ is an embedded (possibly non complete) submanifold of $M$ and the restriction of $\mathcal{F}$ to $C$ is a Riemannian foliation. Given $p\in M$, let $\Sigma^{p}$ be the connected component of $\Sigma_{(k)}$ through $p$, where $k=\dim L_p$. We will refer to the decomposition of $M$ into the submanifolds $\Sigma^{p}$ as the \emph{canonical stratification} of $M$.

The subset $\Sigma_{(\dim \mathcal{F})}$ is open, dense and connected in $M$; it is called the \emph{regular stratum} of $M$. It will be denoted by $M_0$ and its points will be called \emph{regular points}. If $M_0=M$ we say that the foliation is \emph{regular}. All other strata $\Sigma^p$ have codimension at least $2$ in $M$ and are called \emph{singular strata}. For any singular stratum $\Sigma^{p}$, we have 
\[
\codim(\mathcal{F},\Sigma^{p})<\codim (\mathcal{F},M).
\]

%	SSUBSECTION: INFINITESIMAL SINGULAR RIEMANNIAN FOLIATIONS

\subsection{Infinitesimal singular Riemannian foliations}
\label{SSS:INF_SRF} Let $M$ be a complete Riemannian manifold with a closed singular Riemannian foliation $\mathcal{F}$. Given a point $p\in M$ and some small $\epsilon>0$, let $S_p=\exp_p (\nu_pL_p) \cap B_{\epsilon}(p)$ be a \emph{slice} through $p$, where $B_{\epsilon}(p)$ is the distance ball of radius $\epsilon$ around $p$. The foliation $\mathcal{F}$ induces a foliation $\mathcal{F}|_{S_p}$ on $S_p$ by letting the leaves of $\mathcal{F}|_{S_p}$ be the connected components of the intersection between $S_p$ and the leaves of $\mathcal{F}$. The foliation $(S_p,\mathcal{F}|_{S_p})$ may not be a singular Riemannian foliation with respect to the induced metric on $S_p$. Nevertheless, the \emph{pull-back} foliation $\exp_p^{*}(\mathcal{F})$ is a singular Riemannian foliation on $\nu_pL_p\cap B_{\epsilon}(0)$ equipped with the Euclidean metric (cf.~\cite[Proposition 6.5]{Mo}), and it is invariant under homotheties fixing the origin (cf.~\cite[Lemma 6.2]{Mo}). In particular, it is possible to extend $\exp^*(\mathcal{F})$ to all of $\nu_pL_p$, giving rise to a singular Riemannian foliation $(\nu_pL_p,\mathcal{F}_p)$ called the \emph{infinitesimal foliation} of $\mathcal{F}$ at $p$.

Notice that $0\in \nu_p(L_p)$ is always a leaf of the infinitesimal foliation $\mathcal{F}_p$. By definition, leaves stay at a constant distance from each other, in particular every leaf stays in some distance sphere around the origin, and it makes sense to consider the infinitesimal foliation restricted to the unit sphere. Since the infinitesimal foliation is invariant under homothetic transformations, it can be reconstructed from its own restriction to the unit sphere. Taking this into account, we will sometimes refer to $(\nu^1_pL_p,\mathcal{F}_p)$ also as the infinitesimal foliation at $p$ and shall write $(\sphere^\perp_{p}, \fol_{p})$.

Given two points $p_1, p_2$ in some leaf $L$, the corresponding infinitesimal foliations $(\sphere^\perp_{p_1}, \fol_{p_1})$, $(\sphere^\perp_{p_2}, \fol_{p_2})$ are foliated isometric, in the sense that there is a (non-canonical) linear isometry $\sphere^\perp_{p_1}\to \sphere^\perp_{p_2}$ preserving the foliation. Moreover, these foliations can be  glued together to give a foliation on $\nu^1(L)$ in the following sense: If one identifies $\nu^1(L)$  via the normal exponential map with $\partial \mathrm{Tub}_{\epsilon}(L)$, the boundary of an $\epsilon$-tubular neighborhood of $L$, then the intersections of leaves in $\fol$ with with $\sphere^\perp_p$ are exactly the leaves in $\fol_p$. In particular, if $L'$ is a leaf in $\partial \mathrm{Tub}_{\epsilon}(L)\simeq \nu^1L$, then $L'$ is a union of infinitesimal leaves. Moreover, if $p\in L$ and $q\in L'$ can be written as $q=\exp_p \epsilon v$, $v\in \sphere^\perp_p$, then the connected components of a fiber of $p$ under the metric projection $L'\to L$ (which is a submersion, cf. \cite[Lemma~6.1]{Mo}) are given by $\mc{L}_v$, where $\mc{L}_v\in \fol_p$ are diffeomorphic to the infinitesimal leaf of $\sphere^\perp_p$ passing through $v$. Therefore, there is a fibration
\begin{equation}
\label{E:Inf_fibration}
\mc{L}_v\to L_{q}\to \bar{L}_p
\end{equation}
for some finite cover $\bar{L}_p\to L_p$.

% REMARK: SPACE OF DIRECTIONS

\begin{rem}
\label{R:Space_Dir} 

Let $(M,\fol)$ be a closed singular Riemannian foliation. As recalled in Section~\ref{SS:SRF}, the leaf space $M^*$ is an Alexandrov space of curvature locally bounded below. Let us quickly recall the procedure to compute the space of directions $\Sigma_{p^*}$ at a point $p^*\in M^*$. Let $p\in M$ be a point in the preimage of $p^*$ and let $\sphere_{p^*}=\sphere^\perp_p/\fol_p$ be the quotient of the infinitesimal foliation $(\sphere^\perp_p,\fol_p)$. The fundamental group $\pi_1(L_p)$ acts on $\sphere_{p^*}$ by isometries via the so-called \emph{holonomy action} and $\Sigma_{p^*}$ is isometric to $\sphere_{p^*}/\pi_1(L_p)$. Given $v\in \sphere^\perp_p$ with image  $v^*\in \sphere_{p^*}$, let $H$ be the subgroup of $\pi_1(L_p)$ fixing $v^*$. Then, in fibration~\eqref{E:Inf_fibration}, the cover $\bar{L}_p$ is $\tilde{L}_p/H$, where $\tilde{L}_p$ is the universal cover of $L_p$. 

\end{rem}

% EXAMPLE

\begin{example}
Let $(M,G)$ be a homogeneous foliation.  Given a point $p\in M$, the connected component $G_p^0$ of the isotropy group $G_p$ acts on $\nu_p(G p)$ by isometries, via the so-called \emph{slice representation}. In this case, the infinitesimal foliation $\mathcal{F}_p$ is the homogeneous foliation given by the orbits of $G_p^0$ on $\nu_p(G p)$. Given $q\in M$ close to $p$, with isotropy $G_q<G_p$, the projection \eqref{E:Inf_fibration} is the projection 
\[
G_p^0/G_q\to G/G_q \to G/G_p^0,
\]
where $G/G_p^0$ is a cover of the orbit $G/G_p$ though $p$. 
\end{example}

%	SSUBSECTION: THE MOLINO CONSTRUCTION

\subsection{The Molino bundle}\label{SS:Molino}

We conclude this section by recalling the main properties of the so-called \emph{Molino bundle} (cf.~\cite[Proposition 4.1]{Mo}). We let $(M,\fol)$ be a closed singular Riemannian foliation of codimension $q$ on a compact Riemannian manifold $M$. Since the foliation on the regular stratum $M_0$ is regular, there exists a principal $\O(q)$-bundle $\hat{M}\to M_0$, called the \emph{Molino bundle}, and a foliation $(\hat{M},\hat{\fol})$ such that the leaves of $\hat{\fol}$ are Galois covers of the leaves of $\fol$. Moreover, the leaves of $\hat{\fol}$ are actually diffeomorphic to those of $\fol$ on an open dense set. In addition, since $\fol$ is closed, the leaves of $\hat{\fol}$  are given by the fibers of a submersion $\theta:\hat{M}\to W$, where $W$ is the frame bundle of the orbifold $M_0/\fol$. In particular, $W$ is a manifold with an almost free smooth $\O(q)$-action and $\theta$ is $\O(q)$-equivariant.

Let $\hat{M}_{\O(q)}=\hat{M}\times_{\O(q)}E\O(q)$ and $W_{\O(q)}=W\times_{\O(q)}E\O(q)$ be the Borel constructions of $\hat{M}$ and $W$, respectively. Then $\hat{M}_{\O(q)}$ is homotopy equivalent to $M_0$  and $\theta$ induces a fibration $\hat{\theta}: \hat{M}_{\O(q)}\to W_{\O(q)}$ with the same fibers as $\theta:\hat{M}\to W$. Furthermore, the space $B=W_{\O(q)}$ coincides with Haefliger's classifying space of the orbifold $M_0/\fol$ (cf.~\cite{Hf}). Therefore, up to homotopy, there is a fibration
\begin{equation}\label{E:Borel_Molino}
L\stackrel{\iota}{\lra} M_0\stackrel{\hat{\theta}}{\lra} B,
\end{equation}
where $L$ is a regular leaf of $\fol$.
\\

%------------------------------------------------------------------
%	SECTION: BIEBERBACH FOLIATIONS
%------------------------------------------------------------------

\section{A-foliations and B-foliations}
\label{S:B_fol}
We now introduce \emph{A-foliations} and \emph{B-foliations}, which are the main object of study in our paper. 

\begin{defn}[A-foliation] 
A closed singular Riemannian foliation $(M,\fol)$ is an \emph{A-foliation} if every leaf is an aspherical manifold.
\end{defn}

% DEFINITION

\begin{defn}[B-foliation]
A closed singular Riemannian foliation $(M,\fol)$ is a \emph{B-foliation} if every leaf is homeomorphic to some Bieberbach manifold.
\end{defn}

Recall that  a \emph{Bieberbach manifold} is a manifold diffeomorphic to $\RR^n/G$, where $G$ is a discrete group of Euclidean isometries acting freely and cocompactly on $\RR^n$. These groups are called \emph{Bieberbach groups}. Abstractly, Bieberbach groups can be characterized as torsion-free groups with a normal finite index abelian subgroup (cf.~\cite{Wo}). In particular, every subgroup of a Bieberbach group is a Bieberbach group. Every Bieberbach manifold is compact, has no boundary and admits a flat Riemannian metric. 

% REMARK

\begin{rem}
\label{R:Borel_Conj}
We shall use the fact that any aspherical manifold $N$ with fundamental group isomorphic to a Bieberbach group $G$ must be homeomorphic to a Bieberbach manifold. To see this, observe first that $N$ must be homotopy equivalent to $\RR^n/G$, since both are models for $K(G,1)$. It then follows from the solution of the Borel conjecture for flat manifolds (cf.~\cite[Section 4]{Fa} and \cite{BBBP}) that $N$ and $\RR/G$ must be homeomorphic. The manifolds  $N$ and $\RR^n/G$ may not be diffeomorphic, as illustrated by the existence of exotic  tori, which appear already in dimension $5$  (cf.~\cite{HS}). 
\end{rem}

% REMARK

\begin{rem}
\label{R:Borel_conj}
In the preceding definition we do not assume that the leaves are flat with the induced Riemannian metric.
\end{rem}

% EXAMPLE

\begin{example} Every isometric torus action on a complete Riemannian manifold induces a (homogeneous) B-foliation.  Bundles whose fibers are homeomorphic to Bieberbach manifolds are also examples of B-foliations. 
\end{example}

% EXAMPLE

\begin{example}[Non-homogeneous B-foliations] The simplest way to construct non-homogeneous B-foliations with regular leaves homeomorphic to tori is to take the Riemannian product of a complete Riemannian manifold and an exotic torus. As the leaves are exotic tori, they cannot correspond to the orbits of an isometric torus action. 

One can also construct non-homogeneous B-foliations whose leaves are diffeomorphic to flat tori in the following way. Let $B$ be a compact smooth manifold with non-trivial fundamental group, let $T^n$ be a standard $n$-di\-men\-sional torus, and $\rho:\pi_1(B)\to \Diff(T^n)$ a homomorphism. Let $\tilde{B}$ be the universal cover of $B$ and let $\pi_1(B)$ act diagonally on the product $\tilde{B}\times T^n$. This action is free and, taking the quotient, we obtain a fiber bundle 
\[
T^n\rightarrow \tilde{B}\times_{\pi_1(B)} T^n \rightarrow B.
\]
The total space $ \tilde{B}\times_{\pi_1(B)} T^n$ is B-foliated by the fibers of the bundle. If the B-foliation is homogeneous, then the bundle is principal, and the structure group reduces to a subgroup of $T^n\In \Diff(T^n)$. In particular, the structure group is contained in the identity component  $\Diff_0(T^n)$ of $\Diff(T^n)$. Thus, to construct a non-homogeneous B-foliation, it is enough to consider a homomorphism $\rho:\pi_1(B)\to \Diff(T^n)$ whose image is not entirely contained in $\Diff_0(T^n)$.

As a concrete example, if $B=\sphere^1$, $T^n=\sphere^1$, and $\rho: \pi_1(B)=\ZZ\to \Diff(\sphere^1)\simeq \O(2)$ is an orientation reversing diffeomorphism, then $ \tilde{B}\times_{\pi_1(B)}T^n$ is a Klein bottle $K$, and the foliation is given by the fibers of the submersion $K\to \sphere^1$.
\end{example}

In the case of A- and B-foliations, the total space and the base in fibration~\eqref{E:Inf_fibration} are homeomorphic, respectively,  to aspherical and Bieberbach manifolds. The following general result shows that these classes of manifolds are well-behaved with respect to fibrations. 

% THEOREM: FIBRATION BY BIEBERBACH MANIFOLDS

\begin{thm}\label{T:fibration-flat}
Let $F$, $M$, and $N$ be topological manifolds and let $F\to M\to N$ be a fibration.
\begin{enumerate}
	\item  If $M$ is aspherical, then $F$ and $N$ are aspherical.
	\item  If $M$ is homeomorphic to a Bieberbach manifold, then $F$ and $N$ are homeomorphic to Bieberbach manifolds.
\end{enumerate}
\end{thm}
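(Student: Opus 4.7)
The plan is to combine the long exact sequence of the fibration with a universal-cover trick that turns the asphericity of $N$ into a statement about loop spaces, and then exploit the manifold structure to finish.

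First, from $\pi_k(M)=0$ for $k\ge 2$, the LES of the fibration $F\to M\to N$ gives $\pi_k(N)\cong \pi_{k-1}(F)$ for $k\ge 3$ and $\pi_2(N)\cong \ker(i_\#\colon \pi_1(F)\to \pi_1(M))$, where $i$ is the inclusion of a fiber. Consequently, for part~(1) it suffices to show that $N$ is aspherical: asphericity of $F$ and injectivity of $i_\#$ then follow automatically from $\pi_k(N)=0$ for $k\ge 2$.

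To prove that $N$ is aspherical I would pass to the universal cover $p\colon \tilde M\to M$. Since $M$ is aspherical, $\tilde M$ is a contractible topological manifold, and the composite $\tilde M\to M\to N$ is itself a fibration. Because its total space is contractible, its fiber $\hat F=p^{-1}(F)$ is weakly homotopy equivalent to $\Omega N$, and each connected component $\hat F_0$ of $\hat F$ is a topological submanifold of $\tilde M$ of dimension $\dim F$ which is weakly equivalent to the component of the constant loop $\Omega_0 N\simeq \Omega\tilde N$. The problem thus reduces to the following manifold-theoretic question, which I expect to be the main obstacle: if $\tilde N$ is a simply connected topological manifold and $\Omega\tilde N$ is weakly equivalent to a topological manifold, must $\tilde N$ be contractible? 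The approach I have in mind is to combine the Milnor--Moore description of $H_*(\Omega\tilde N;\mathbb{Q})$ as a universal enveloping algebra of $\pi_*(\Omega\tilde N)\otimes\mathbb{Q}$ with the bounded cohomological dimension coming from the manifold structure on $\hat F_0$: finite-dimensionality of $H_*(\Omega\tilde N;\mathbb{Q})$ forces $\pi_*(\tilde N)\otimes\mathbb{Q}$ to be concentrated in even degrees and finite-dimensional, while the manifold structure of $\tilde N$ (together with Poincar\'e duality in the compact case) forces this to be trivial, whence $\tilde N$ is contractible.

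Part~(2) reduces to part~(1) together with group-theoretic observations. Injectivity of $i_\#$ embeds $\pi_1(F)$ into the Bieberbach group $\pi_1(M)$, and since subgroups of Bieberbach groups are Bieberbach, $\pi_1(F)$ is Bieberbach. The short exact sequence $1\to \pi_1(F)\to \pi_1(M)\to \pi_1(N)\to 1$ presents $\pi_1(N)$ as a quotient of a virtually abelian group, hence virtually abelian, and $\pi_1(N)$ is torsion-free because it is the fundamental group of a closed aspherical manifold; thus $\pi_1(N)$ is Bieberbach as well. The Borel conjecture for flat manifolds (Remark~\ref{R:Borel_Conj}) then upgrades the resulting homotopy equivalences $F\simeq \RR^f/\pi_1(F)$ and $N\simeq \RR^n/\pi_1(N)$ to homeomorphisms.
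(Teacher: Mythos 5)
Your part (2) is essentially the paper's argument (inject $\pi_1(F)$ into $\pi_1(M)$, push the finite-index free abelian subgroup forward, check torsion-freeness, then invoke the Borel conjecture for flat manifolds), so the issue is entirely in part (1), on which part (2) depends. The reduction via the long exact sequence and the passage to the fibration $\hat F\to\tilde M\to N$ with contractible total space are fine, but the step you yourself flag as ``the main obstacle'' does not close. First, a repairable inaccuracy: you assert finite-dimensionality of $H_*(\Omega\tilde N;\mathbb{Q})$, but $\hat F_0$ is only a (possibly infinite) cover of $F$, so its total rational homology need not be finite-dimensional; what you actually have is vanishing of its homology above $\dim F$, and the Milnor--Moore argument can indeed be run with that weaker input (an even-degree element of $\pi_*(\Omega\tilde N)\otimes\mathbb{Q}$ generates a polynomial subalgebra of $U(L)$, and infinitely many odd generators give products of unbounded degree). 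The genuine gap is the final clause ``forces this to be trivial, whence $\tilde N$ is contractible'': the rational argument can only yield $\pi_*(\tilde N)\otimes\mathbb{Q}=0$, i.e.\ that $\pi_k(N)$ is torsion for $k\ge 2$, and rational triviality does not imply contractibility. Torsion cannot be excluded by rational homotopy theory alone: there are simply connected, non-contractible open manifolds with trivial rational homotopy and homology (e.g.\ an open regular neighborhood of a Moore space $M(\mathbb{Z}/p,2)$ embedded in Euclidean space), and the mod-$p$ loop-space growth theorems you would need instead (Borel's structure theory is not enough, and McGibbon--Neisendorfer/Miller requires finiteness hypotheses on $\tilde N$ that you do not have) are not available here. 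Since the paper's applications (e.g.\ Corollary~\ref{C:torus_leaves} and part (2) itself) need honest asphericity, not just rational asphericity, this is a real missing ingredient, not a cosmetic one.

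For comparison, the paper avoids loop spaces entirely: it applies the Serre spectral sequence directly to the universal-cover fibration $H\to\tilde M\to\tilde N$ (contractible total space, simply connected base) and uses that both $H$ and $\tilde N$, being manifolds, have finite cohomological dimension; the surviving top-corner class forces all reduced cohomology of $\tilde N$ and of $H$ to vanish, and simple connectivity then gives contractibility of $\tilde N$, after which asphericity of $F$ follows from the long exact sequence exactly as in your reduction. If you want to salvage your outline, the cleanest fix is to replace the Milnor--Moore step by this corner argument (run with field coefficients for every prime and rationally, then pass to integral vanishing), which upgrades your rational conclusion to the integral one that the theorem requires.
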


% PROOF

\begin{proof}

We first prove part (1). Consider the fibration between the universal covers $ \tilde{M}\to \tilde{N}$, with fiber $H$.  Since $\tilde{M}$  is contractible and $\tilde{N}$ is simply connected, we can apply the Serre spectral sequence with integral coefficients, and from it we obtain that $H$ and $\tilde{N}$ are contractible. In fact, if $H^*(H)$ has cohomological dimension $a$, and $H^*(\tilde{N})$ has cohomological dimension $b$, then $H^*(\tilde{M})$ has cohomological dimension $a+b$ and this has to be $0$. In particular, $\tilde{N}$ is aspherical. Therefore, $N$ is aspherical and, from the long exact sequence in homotopy, so is $F$. 

Now we prove part (2). By Remark~\ref{R:Borel_Conj} it suffices to show that $F$ and $N$ are aspherical and $\pi_1(F)$ and $\pi_1(N)$ are Bieberbach groups. Since $M$ is homeomorphic to a Bieberbach manifold, it follows from part (1) that $F$ and $N$ are aspherical. From the long exact sequence of the fibration, we have
\[
1\to\pi_1(F)\to\pi_1(M)\to \pi_1(N)\to 1,
\]
where $\pi_1(M)$ is a Bieberbach group, i.e.~a torsion free group with a finite index normal abelian subgroup. Since $\pi_1(F)$ is a subgroup of a Bieberbach group, it is again a Bieberbach group and therefore $F$ is homeomorphic to a flat manifold.

We now prove that $\pi_1(N)$ is a Bieberbach group. First, we show that $\pi_1(N)$ is torsion free. Suppose this is not the case. Then there is a finite cyclic subgroup $\ZZ_k$ acting freely on the contractible manifold $\tilde{N}$. It follows that $\tilde{N}/\ZZ_k$ is a $K(\ZZ_k,1)$, which contradicts the fact that $K(\ZZ_k,1)$ has infinite cohomological dimension.

Finally, let us show that $\pi_1(N)$ contains a finite index normal abelian subgroup.  Since $\pi_1(M)$ is a Bieberbach group, there exists a finite index normal subgroup $\ZZ^d\In \pi_1(M)$. The image of $\ZZ^d$ in $\pi_1(N)$ is a finitely generated normal torsion free abelian group $A$. Since the map $\pi_1(M)/\ZZ^d\to \pi_1(N)/A$ is surjective, $A$ has finite index in $\pi_1(N)$. Therefore $\pi_1(N)$ is a Bieberbach group, and therefore $N$ is homeomorphic to a Bieberbach manifold.
\end{proof}

Theorem  \ref{T:fibration-flat} can be applied to fibration \eqref{E:Inf_fibration} to obtain the following corollaries.

% COROLLARY

\begin{cor} Let  $(M,\fol)$ be a closed singular Riemannian foliation.
\begin{enumerate}
	\item  If a regular leaf of $(M,\fol)$ is aspherical, then $(M,\fol)$ is an A-foliation.
	\item  If a regular leaf of $(M,\fol)$ is homeomorphic to a flat manifold, then $(M,\fol)$ is a B-foliation.
\end{enumerate}
\end{cor}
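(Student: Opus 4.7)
My plan is to use the fibration~\eqref{E:Inf_fibration} to propagate the property from a regular leaf to an arbitrary leaf. Let $L_p$ be any leaf of $\fol$ and fix a small tubular neighborhood $\Tub_\epsilon(L_p)$. Because the regular stratum $M_0$ is open and dense in $M$, I can choose $v \in \sphere^\perp_p$ so that $q = \exp_p(\epsilon v)$ lies in $M_0$; then $L_q$ is a regular leaf, and~\eqref{E:Inf_fibration} yields a fibration
\[
\mc{L}_v \longrightarrow L_q \longrightarrow \bar{L}_p,
\]
where $\bar{L}_p$ is a finite cover of $L_p$.

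For part (1), I apply part (1) of Theorem~\ref{T:fibration-flat}: since by hypothesis $L_q$ is aspherical, so is $\bar{L}_p$. As $\bar{L}_p \to L_p$ is a finite cover, both manifolds share the same contractible universal cover, and hence $L_p$ is aspherical. For part (2), part (2) of Theorem~\ref{T:fibration-flat} yields that $\bar{L}_p$ is homeomorphic to a Bieberbach manifold, so $\pi_1(\bar{L}_p)$ is a Bieberbach group, in particular torsion-free and virtually abelian.

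The main obstacle is to upgrade this to the statement that $\pi_1(L_p)$ itself is a Bieberbach group, given only that a finite-index subgroup is. Since $\pi_1(\bar{L}_p)$ has finite index in $\pi_1(L_p)$, the latter is virtually abelian; taking the normal core of a finite-index abelian subgroup produces a normal finite-index abelian subgroup of $\pi_1(L_p)$. Part (1), already established, shows that $L_p$ is aspherical, so $\pi_1(L_p)$ acts freely on a contractible manifold of finite dimension, which, as in the torsion argument in the proof of Theorem~\ref{T:fibration-flat}, forces $\pi_1(L_p)$ to be torsion-free. Hence $\pi_1(L_p)$ is a Bieberbach group, and since $L_p$ is aspherical, Remark~\ref{R:Borel_Conj} concludes that $L_p$ is homeomorphic to a Bieberbach manifold.
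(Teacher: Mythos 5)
Your proposal is correct and follows exactly the paper's (essentially one-line) proof: apply Theorem~\ref{T:fibration-flat} to the fibration~\eqref{E:Inf_fibration} with a nearby regular leaf as total space and a finite cover $\bar{L}_p$ of the given leaf as base. Your extra step descending from $\bar{L}_p$ to $L_p$ (shared contractible universal cover for asphericity; torsion-freeness plus a normal-core finite-index abelian subgroup for the Bieberbach case, then Remark~\ref{R:Borel_Conj}) simply fills in a detail the paper leaves implicit, and is sound.
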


% COROLLARY

\begin{cor} \hfill

\label{C:Inf_B_fol}
\begin{enumerate}
	\item The infinitesimal foliations of an A-foliation are again A-foliations.
	\item The infinitesimal foliations of a B-foliation are again B-foliations.
\end{enumerate}
\end{cor}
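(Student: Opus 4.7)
The plan is to apply Theorem~\ref{T:fibration-flat} directly to the fibration \eqref{E:Inf_fibration} already assembled in Section~\ref{SSS:INF_SRF}. Fix $p\in M$ and consider the infinitesimal foliation $(\sphere^\perp_p,\fol_p)$. A preliminary observation is that $\fol_p$ is itself a closed singular Riemannian foliation: its non-trivial leaves lie in distance spheres around the origin, so are compact and in particular closed. Hence the properties of being an A-foliation or a B-foliation make sense for $\fol_p$, and must be checked leaf by leaf.

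Now let $\mc{L}\subset\sphere^\perp_p$ be any leaf of $\fol_p$, pick $v\in\mc{L}$, and set $q=\exp_p(\eps v)$ for $\eps$ small. By the discussion preceding \eqref{E:Inf_fibration}, the leaf $L_q$ of $\fol$ through $q$ sits in a fibration
\[
\mc{L}_v\lra L_q\lra \bar{L}_p,
\]
in which the fiber $\mc{L}_v$ is diffeomorphic to $\mc{L}$ (up to rescaling, using homothety invariance of $\fol_p$) and the base $\bar{L}_p$ is a finite cover of $L_p$. For (1), since $\fol$ is an A-foliation, $L_q$ is aspherical, so Theorem~\ref{T:fibration-flat}(1) gives that $\mc{L}_v$, and thus $\mc{L}$, is aspherical. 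For (2), since $\fol$ is a B-foliation, $L_q$ is homeomorphic to a Bieberbach manifold, and Theorem~\ref{T:fibration-flat}(2) gives that $\mc{L}_v$ is homeomorphic to a Bieberbach manifold. The remaining leaf $\{0\}$ of $\fol_p$ is a single point, which is trivially aspherical and a $0$-dimensional Bieberbach manifold.

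There is no real obstacle here: all the topological content has been absorbed into Theorem~\ref{T:fibration-flat}, and all the geometric content into the construction of the fibration \eqref{E:Inf_fibration}. The only point worth mentioning is the use of homothety invariance of $\fol_p$, which guarantees that every non-trivial leaf of $\fol_p$ can be rescaled to sit inside $\sphere^\perp_p$ and thereby appear as a fiber $\mc{L}_v$ in \eqref{E:Inf_fibration}; without this one would only control leaves at some fixed radius.
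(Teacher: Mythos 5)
Your argument is correct and is exactly the paper's: the authors prove the corollary by applying Theorem~\ref{T:fibration-flat} to the fibration \eqref{E:Inf_fibration}, which is what you do, with the total space $L_q$ aspherical (resp.\ homeomorphic to a Bieberbach manifold) forcing the fiber $\mc{L}_v$, hence the infinitesimal leaf, to be as well. Your added remarks on homothety invariance and the trivial leaf at the origin are harmless elaborations of the same proof.
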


% HOMOGENEITY OF CIRCLE FOLIATIONS

Since B-foliations generalize torus actions, it is natural to ask when such a foliation is homogeneous. We will now show that closed singular Riemannian foliations by circles on compact, simply connected Riemannian manifolds are homogeneous, answering the simplest instance of this question.  We first prove the following general lemma. 

% LEM

\begin{lem}
\label{L:ORIENT_FOL}
Let $(M,\fol)$ be a closed, singular Riemannian foliation on a compact, simply connected Riemannian manifold $M$. Then the foliation $(M,\fol)$ restricted to the regular part $M_0$ is orientable.
\end{lem}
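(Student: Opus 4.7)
The foliation $\fol|_{M_0}$ is regular, so orientability amounts to the tangent distribution $T\fol|_{M_0}$, a smooth real subbundle of $TM_0$ of rank $\dim\fol$, having vanishing first Stiefel--Whitney class in $H^1(M_0;\mathbb{Z}/2)$. My strategy is to show that $M_0$ is simply connected; the orientability will then follow automatically.

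The first step is to reduce $\pi_1(M_0)$ to $\pi_1(M)$. By Section \ref{basic}, every singular stratum $\Sigma^p$ is a smooth embedded submanifold of $M$ of codimension at least $2$, and compactness of $M$ guarantees that the canonical stratification has only finitely many strata. Hence the singular set $M\setminus M_0$ is a finite union of smooth submanifolds of codimension $\geq 2$ in $M$. A standard transversality/general position argument then yields that the inclusion $M_0 \hookrightarrow M$ is a $\pi_1$-isomorphism: any loop in $M$ is homotopic into $M_0$ (codimension $\geq 2$), and any null-homotopy in $M$ can be made transverse to the strata and perturbed into $M_0$ (codimension $\geq 3$ in a two-dimensional disk). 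Since $M$ is simply connected, this gives $\pi_1(M_0)=1$.

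The second step is immediate: $\pi_1(M_0)=1$ together with Hurewicz and universal coefficients yields $H^1(M_0;\mathbb{Z}/2)=0$, so $w_1(T\fol|_{M_0})=0$ and $\fol|_{M_0}$ is orientable.

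The only delicate point I foresee is articulating the transversality argument cleanly when several strata of possibly different codimensions are present at once. A convenient way to handle this is to process strata by increasing dimension (or equivalently decreasing codimension in $M$), which is possible because only finitely many strata appear. Apart from this technicality the proof is short, and everything reduces to the codimension bound $\codim(\Sigma^p,M)\geq 2$ recalled in Section~\ref{basic}.
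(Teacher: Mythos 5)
There is a genuine gap at the very first step: your claim that $\pi_1(M_0)=1$ is false in general, because the singular strata are only guaranteed to have codimension $\geq 2$, not $\geq 3$. Removing a codimension\nobreakdash-$2$ stratum does make $\pi_1(M_0)\to\pi_1(M)$ surjective, but injectivity fails: a generic $2$-disk in $M^n$ meets a codimension\nobreakdash-$2$ submanifold in isolated points (the expected dimension of the intersection is $0$), so a null-homotopy in $M$ cannot in general be perturbed into $M_0$ — your parenthetical ``codimension $\geq 3$ in a two-dimensional disk'' is exactly the hypothesis you do not have. A concrete counterexample to $\pi_1(M_0)=1$ is the foliation of $\sphere^2$ by latitude circles: the two poles are codimension\nobreakdash-$2$ singular leaves and $M_0$ is an annulus with $\pi_1(M_0)\cong\ZZ$; similar phenomena occur for codimension\nobreakdash-$2$ strata of torus actions on $\sphere^4$. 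Indeed, the paper itself is careful about this point: in the proof of Theorem~\ref{T:fund_gp} only the strata of codimension at least $3$ are discarded while preserving simple connectivity, and the loops around the codimension\nobreakdash-$2$ strata are precisely what generate $\pi_1(M_0)$ there. Consequently $H^1(M_0;\ZZ/2)$ need not vanish and your Stiefel--Whitney argument collapses.

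The paper's proof takes a different and essentially unavoidable route: by \cite[Proposition~3.7]{Mo} the quotient $M_0/\fol$ is an orbifold, and by Lytchak's result \cite[Corollary~5.3]{Ly10} its orbifold fundamental group is trivial, i.e.\ the classifying space $B$ of $M_0/\fol$ is simply connected and hence orientable; this gives transverse orientability of $\fol|_{M_0}$, and combined with the orientability of $M_0$ (an open subset of the simply connected $M$) one obtains orientability of the leafwise tangent distribution. If you want to salvage your write-up, you must replace the claim $\pi_1(M_0)=1$ by this transverse argument (or some other input controlling the holonomy around the codimension\nobreakdash-$2$ strata); vanishing of $w_1(T\fol|_{M_0})$ cannot be deduced from the topology of $M_0$ alone.
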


\begin{proof} By \cite[Proposition~3.7]{Mo}, the leaf space $M_0/\fol$ is an orbifold. Lytchak showed in \cite[Corollary~5.3]{Ly10} that $\pi^{\mathrm{orb}}_1(M_0/\fol)=1$, i.e.~the classifying space $B$ of $M_0/\fol$ is simply connected. In particular, $B$ is orientable, which implies the result.
\end{proof}

% THM: HOMOGENEITY

\begin{thm}
\label{T:HOMOGENEITY} Let $(M,\fol)$ be a closed, singular Riemannian foliation on a compact, simply connected Riemannian manifold. If the regular leaves of the foliation are circles, then the foliation is homogeneous.
\end{thm}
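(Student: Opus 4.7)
Because regular leaves have dimension one, every singular leaf is a point, so the singular stratum $\Sigma_0 := M \setminus M_0$ is a closed subset of codimension at least two. My plan is to construct an $\sphere^1$-action on $M_0$ in closed form and then extend it across $\Sigma_0$ by an induction on $n = \dim M$ via the infinitesimal foliation.

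First, on $M_0$, Lemma~3.7 makes $\fol|_{M_0}$ orientable, so there is a smooth unit vector field $X_0$ tangent to the leaves defining the orientation. Since $\fol|_{M_0}$ is a closed regular Riemannian foliation, the quotient $M_0/\fol$ is a smooth Riemannian orbifold and the leaf-length $\ell: M_0 \to (0,\infty)$, $\ell(q) := \mathrm{length}(L_q)$, is smooth and basic. The rescaled vector field $X := (\ell/2\pi)\, X_0$ has a flow of common period $2\pi$ on every leaf, so it generates a smooth free $\sphere^1$-action $\rho$ on $M_0$ whose orbits are exactly the leaves.

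Next I induct on $n$. The base case $n=2$ ($M=\sphere^2$) is direct: the infinitesimal foliation on $\sphere^1$ is trivial with a single leaf, which combined with Poincar\'e--Hopf forces $\fol$ to be the standard latitude foliation, homogeneous under $\mathrm{SO}(2)$. For $n \geq 3$, fix $p \in \Sigma_0$. Then $\sphere^\perp_p \cong \sphere^{n-1}$ is simply connected, and by Corollary~3.3 together with~\eqref{E:Inf_fibration} (in which $\bar{L}_p$ is a point because $L_p = \{p\}$), the infinitesimal foliation $(\sphere^\perp_p, \fol_p)$ is a closed A-foliation by circles. The inductive hypothesis yields an isometric $\sphere^1$-action on $\sphere^\perp_p$ realizing $\fol_p$. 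This action extends to an orthogonal linear $\sphere^1$-representation on $\nu_p L_p$, which via $\exp_p$ becomes a smooth $\sphere^1$-action $\rho_p$ on the slice $S_p$ (an open neighborhood of $p$ in $M$), whose orbits are precisely the leaves of $\fol$ through $S_p$.

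Finally, on $S_p \cap M_0$ both $\rho$ and $\rho_p$ are free $\sphere^1$-actions with the same orbits. Since a free $\sphere^1$-action on a connected manifold is determined by its oriented orbits together with its common period, after normalizing $\rho_p$ to period $2\pi$ and orienting it compatibly with $X_0$, the infinitesimal generators of $\rho$ and $\rho_p$ coincide (each is the unique positively oriented leafwise vector field of magnitude $\ell/2\pi$), so $\rho = \rho_p$ on the overlap. Thus $\rho$ extends smoothly across $p$ as $\rho_p$ on $S_p$. Gluing these local extensions at every point of $\Sigma_0$ produces a smooth $\sphere^1$-action on $M$ whose orbits are the leaves of $\fol$, i.e., $(M,\fol)$ is homogeneous. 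The main obstacle is the smooth extension of $\rho$ across $\Sigma_0$: it relies on the inductive hypothesis applied to the infinitesimal foliation on $\sphere^{n-1}$ (simply connected precisely when $n \geq 3$, forcing $n=2$ to be handled separately) together with the uniqueness of a free $\sphere^1$-action with prescribed oriented orbits; the smoothness and basic property of $\ell$ on $M_0$ is a secondary technical point.
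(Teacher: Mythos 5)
Your step on $M_0$ already contains a false claim. A closed regular Riemannian foliation by circles on a simply connected manifold can have exceptional leaves (leaves with nontrivial finite holonomy): the weighted Hopf foliations of $\sphere^3$, which the paper explicitly allows for in Theorem~E, are of this type. At such a leaf the lengths of nearby leaves converge to an integer multiple of the length of the limit leaf, so your leaf-length function $\ell$ is not even continuous there, the rescaled field $X$ is not smooth, and any circle action inducing the foliation is only almost free, never free. The correct statement --- an orientable closed one-dimensional Riemannian foliation of a compact manifold is induced by a smooth, in general only almost free, circle action --- is essentially what the paper invokes right after the orientability lemma (Lemma~\ref{L:ORIENT_FOL}), but your explicit construction, and the freeness you rely on later, fail as stated.

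The extension across the singular set has two further gaps. First, the inductive hypothesis applied to $(\sphere^\perp_p,\fol_p)$ only yields \emph{some} smooth circle action inducing $\fol_p$; it gives neither an action by isometries of the round metric nor a linear one, and smooth circle actions on spheres need not be linearizable, so the passage to ``an orthogonal linear $\sphere^1$-representation on $\nu_pL_p$'' is unjustified --- and without linearity the radial (cone) extension over the slice need not be smooth at $p$. Second, your gluing criterion is false: a free circle action is \emph{not} determined by its oriented orbits together with its period, since two such actions can differ by an orbitwise reparametrization (a foliated diffeomorphism inducing the identity on the leaf space); moreover the generator of the normalized slice action, pushed forward by $\exp_p$, has no reason to have norm $\ell/2\pi$ along the leaves, so $\rho$ and $\rho_p$ will in general not agree on the overlap. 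The paper sidesteps both problems by never producing a second, independently constructed local action: it takes the single circle action on $M_0$ provided by orientability and extends that same action radially over small tubes around the components of the singular stratum (which consist of point leaves and are closed submanifolds), fixing those components pointwise, so no gluing of distinct actions is needed. If you want to keep your inductive scheme, you would have to prove a linearization statement for the infinitesimal foliation and then correct the matching of $\rho$ with $\rho_p$ by an orbitwise reparametrization, neither of which is addressed in the proposal.
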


\begin{proof} By Lemma~\ref{L:ORIENT_FOL}, the foliation $\fol$ restricted to the regular part $M_0$ is orientable. Hence $(M_0,\fol)$ is given by a circle action. Since the singular strata of the foliation are smooth closed embedded submanifolds, the action can be extended to the singular strata by radially extending it on small tubular neighborhoods around each component of the singular strata. 
\end{proof}

\begin{rem}
In the subsequent sections we will  assume  all manifolds to be compact, unless stated otherwise.
\end{rem}

%---------------------------------------------------------------------------------------------------------------
%	SECTION: FUNDAMENTAL GROUPS OF LEAVES (PROOF OF THEOREM A)
%---------------------------------------------------------------------------------------------------------------

\section{The fundamental group of a regular leaf}
\label{S:FUNDGP_LEAVES}

\subsection*{Proof of Theorem~\ref{T:fund_gp}}
Let $(M,\fol)$ be a closed singular Riemannian foliation on a compact, simply connected Riemannian manifold. Observe that $M$ remains simply connected if we discard all the singular strata of codimension at least $3$.
 Therefore, we may assume that 
\[
M=M_0\cup \bigcup_{i=1}^r\Sigma_i,
\]
where $\Sigma_i$ are the connected components of codimension $2$ of the singular stratum and $M_0$ is the regular stratum of $\fol$. We will assume that $\cup_{i=1}^r\Sigma_i$ is the empty set if there are no codimension $2$ strata.
\\

For each $i=1,\ldots r$, let $U_i$ be a small tubular neighborhood of $\Sigma_i$ with foot-point projection $\ol{\P}_i:U_i\to \Sigma_i$.  The restriction of $\ol{\P}_i$ to $U_i\setminus \Sigma_i$ is a circle bundle. The fibers of this circle bundle define a free homotopy class $[c_i]$ of loops in $M_0$. Moreover, if $p_i\in U_i\setminus\Sigma_i$, then $L_{p_i}$ is entirely contained in $U_i$ and the restriction of $\ol{\P}_i$ to $L_{p_i}$ is a circle bundle
\begin{equation}\label{E:projection}
\sphere^1\to L_{p_i}\to L_{\ol{\P}_i(p_i)}.
\end{equation}

Let us fix a regular leaf $L_0$ in $M_0$ and a point $p_0\in L_0$. For $i=1\ldots r$, fix a horizontal curve $\gamma_i:[0,1]\to M_0$ from $p_0$ to some $p_i\in U_i\setminus\Sigma_i$. This curve induces an isomorphism $h_i:L_{p_i}\to L_0$, given by $h_i( p)=\gamma_p(0)$, where $\gamma_p$ is the only horizontal curve ending at $p$ whose projection to $M_0/\fol$ coincides with the projection of $\gamma_i$. Let $c_i$ be a representative of the free homotopy class $[c_i]$ defined in the preceding paragraph, passing through $p_i$. The element $k_i={h_i}_*(c_i)\in \pi_1(L_0,p_0)$ is uniquely determined, up to a sign, by $\gamma_i$ and $c_i$. Let $K \In \pi_1(L_0,p_0)$ be the group generated by the elements $k_i$. % K IS NORMAL 
Notice that each group $\langle k_i\rangle$  generated by $k_i$ is normal in $\pi_1(L_0,p_0)$ and therefore $K$, being generated by normal subgroups, is normal in $\pi_1(L_0,p_0)$ as well.

If $\gamma_i':[0,1]\to M_0$ is a second horizontal curve from $p_0$ to $p_i'\in U_i\setminus \Sigma_i$, a different homeomorphism $h_i':L_{p_i'}\to L_0$ is induced, and we obtain a different element $k_i'\in \pi_1(L_0,p_0)$.  Letting $\iota:L_0\to M_0$ denote the inclusion of $L_0$ in $M_0$, the elements $\iota_*(k_i)$ and $\iota_*(k_i')$ in $\pi_1(M_0,p_0)$ are conjugate by an element of $\pi_1(M_0,p_0)$. 
\\

Recall from Section \ref{SS:Molino} that, up to homotopy, there is a fibration
\begin{equation}
\label{E:homotopy_fibration}
	L_0\stackrel{\iota}{\lra} M_0 \stackrel{\hat{\theta}}{\lra} B,
\end{equation}
where $\iota: L_0\to M_0$ is the inclusion and $B$ is Haefliger's classifying space of the orbifold $M_0/\fol$. In particular, by definition, $\pi_i^{\mathrm{orb}}(M_0/\fol)=\pi_i(B)$ for all $i>0$.
Let $H$ be the image of the boundary map $\partial:\pi_2(B, b_0)\to \pi_1(L_0,p_0)$ in the homotopy exact sequence of fibration \eqref{E:homotopy_fibration}. There is an exact sequence
\begin{equation}
\label{E:Seq_rigid}
0\to  H\stackrel{\partial}{\lra} \pi_1(L_0,p_0)\stackrel{\iota_*}{\lra} \pi_1(M_0,p_0)\stackrel{\hat{\theta}_*}{\lra}\pi_1(B, b_0)\to 1.
\end{equation}
To prove Theorem~\ref{T:fund_gp}, we will proceed in three steps:
\begin{itemize}
\vspace{.3cm}
	\item[\emph{Step 1:}] $K\In\pi_1(L_0,p_0)$ maps surjectively onto $\pi_1(M_0,p_0)$ under $\iota_*$.\vspace{.3cm}
	
	\item[\emph{Step 2:}] $K$ splits as a product $K_1\times K_2$, where $K_1$ is abelian and $K_2$ is a finite $2$ step nilpotent $2$-group. \vspace{.3cm}
	\item[\emph{Step 3:}] $H\In \pi_1(L)$ is central.
\vspace{.3cm}
\end{itemize}
By the first step, $\pi_1(L_0,p_0)$ is generated by $H$ and $K$. By the second step, $K$ splits as a product $K_1\times K_2$, and by the third step $[K,H]=\{e\}$. Let $A$ denote the group generated by $K_1$ and $H$. Then $A$ and $K_2$ generate $\pi_1(L_0,p_0)$, $[A, K_2]=0$ and $A\cap K_2=\{e\}$. Therefore $\pi_1(L_{0},p_0)$ splits as $A\times K_2$.
\\

% PROOF OF STEP 1

\noindent \textit{Proof of Step 1.} 
Since $\pi_1^{\mathrm{orb}}(M_0/\fol)=\pi_1(B)=1$ by \cite[Corollary 5.3]{Ly10}, the exact sequence in homotopy of fibration \eqref{E:Borel_Molino} implies that $\iota_*$ is surjective. Since $K$ is normal in $\pi_1(L_0,p_0)$, the group $\iota_*(K)$ is normal in $\pi_1(M_0,p_0)$.

Let $c$ be a loop in $M_0$, representing an element $[c]\in \pi_1(M_0,p_0)$. Since $M$ is simply connected, there exists a disk $D\In M$ bounding $c$. We can choose $D$ so that it intersects the strata $\Sigma_i$ transversally, in a finite number of points. For each point $q_{\alpha}$ we can produce a curve $k'_{i_{\alpha}}$ in $D$ going around $q_{\alpha}$ only. The curve $c$ is homotopic to the product of these $k'_{i_{\alpha}}$ and, by the discussion at the beginning of the proof, every such $k'_{i_{\alpha}}$ is conjugate in $\pi_1(M_0,p_0)$ to some $\iota_*(k_i)$ or $\iota_*(k_i)^{-1}$ of $\iota_*(K)$. Since $\iota_*(K)$ is normal, $k'_{i_{\alpha}}$ is an element of $\iota_*(K)$, and so is $[c] $.
\\

% PROOF OF STEP 2

\noindent \textit{Proof of Step 2.} Suppose that $k_i, k_j\in K$ do not commute or, equivalently, that 
\begin{equation}
\label{E:no-commute}
k_ik_jk_i^{-1}\neq k_j.
\end{equation}
Consider the circle bundle $\P_j\circ h_j^{-1}:L_{0}\to L_{\P_j(p_j)}$ where $\P_i$ is the map defined in \eqref{E:projection}. Equation \eqref{E:no-commute} says that $\P_j\circ h_j^{-1}$ is not orientable along a curve representing the class $k_i$. Therefore, we must have  
\begin{equation}\label{E:non_comm}
k_ik_jk_i^{-1}=k_j^{-1}
\end{equation}
or, equivalently, $[k_i,k_j]=k_j^{-2}$. From \eqref{E:non_comm} it follows that $k_j^2$ commutes with all $k_i$, and is therefore central in $K$.

By exchanging the roles of $k_i$ and $k_j$ in  \eqref{E:non_comm}, we similarly obtain $[k_i,k_j]=k_i^{-2}$.
Thus 
\[
k_i^{-2}=k_j^{-2}
\] 
for all non commuting $k_i,k_j$, $1\leq i,j\leq r$.
Since $k_i$ and $k_j$ do not commute,  $k_i$ and $k_j^{-1}$ do not commute either and therefore $k_j^2=k_i^{-2}=k_j^{-2}$. In particular, $k_i^4=e$ unless $k_i$ is central in $K$.

The center of $K$ splits uniquely as $Z(K)=Z_{(2)}\times K_1$, where $Z_{(2)}$ is the Sylow $2$-subgroup of $Z$. Let $N=\scal{k_i|\; k_i\notin Z(K)}$ and $K_2= N\cdot Z_{(2)}$.  By the computations above, $[K_2,K_2]$ is generated by the squares of the generators $k_i$ in $N$ and therefore it is central. Hence, it is abelian, and $K_2$ is $2$-step nilpotent. On the other hand, $[K_2,K_2]$ is finitely generated and every generator has order $2$. Therefore, $[K_2,K_2]$ is a finite $2$-group. It follows from the short exact sequence 
\[
0\rightarrow [K_2,K_2]\rightarrow K_2\rightarrow K_2/[K_2,K_2]\rightarrow 0
\]
that $K_2$ is a finite $2$-group. Clearly, $K_1$ is abelian, $[K_1,K_2]=0$ and $K_1\cap K_2=\{e\}$. Thus $K=K_1\times K_2$.
\\

\noindent \textit{Proof of Step 3.} The map $\partial$ in  sequence \eqref{E:Seq_rigid} can be seen as the map $\alpha_*: \pi_1(\Omega B,b_0)\to \pi_1(L_0,p_0)$ induced by the fibration $\Omega B\to L_0 \to \hat{M}_0$, where $\Omega B$ is the loop space of $B$. The map  $\alpha:\Omega B\to L_0$ extends to an action of $\Omega B$ on $L_0$ via the holonomy of the fibration $\hat{M}_0\to B$. We  will denote this action by ``$\,\star\,$''. The existence of this action implies that $\alpha_*(\pi_1(\Omega B,b_0))\simeq \partial (\pi_2(B))$ is central in $\pi_1(L_0,p_0)$. Indeed, given $\gamma_L\in \pi_1(L_0,p_0)$ and $\gamma_B\in \pi_1(\Omega B, b_0)$, the homotopy
\[
H(s,t)=\gamma_B(s)\star\gamma_L(t)
\]
is a homotopy between $\alpha_*(\gamma_B)\cdot \gamma_L$ and $\gamma_L\cdot \alpha_*(\gamma_B)$. \hfill $\square$
\\

The following corollary is a direct consequence of the proof of Theorem~A. 

\begin{cor}
\label{C:REF_COR}
Let $(M,\fol)$ be a closed  singular Riemannian foliation on a compact, simply connected Riemannian manifold $M$. Let $M_0$ be the regular part of $\fol$, let $L_0$ be a regular leaf and let $r$ be the number of codimension $2$ strata. If $\pi_2^{\mathrm{orb}}(M_0/\fol)=0$, then $\pi_1(L_0)$ is generated by at most $r$ elements.
\end{cor}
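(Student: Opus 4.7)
The plan is to extract the statement directly from the machinery built in the proof of Theorem~\ref{T:fund_gp}, so essentially no new work is required. The idea is that the generators of $\pi_1(L_0,p_0)$ produced in that proof come from two sources, namely the subgroup $K$ (generated by the meridian-type classes $k_1,\ldots,k_r$, one for each codimension-$2$ stratum) and the subgroup $H = \partial(\pi_2(B,b_0))$ arising from the exact sequence \eqref{E:Seq_rigid}. The hypothesis $\pi_2^{\mathrm{orb}}(M_0/\fol)=0$ is designed to kill the second contribution.

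First, I would recall that Haefliger's classifying space $B$ of the orbifold $M_0/\fol$ satisfies $\pi_i(B)=\pi_i^{\mathrm{orb}}(M_0/\fol)$ for all $i>0$, by the very definition of orbifold homotopy groups used in the paper just before \eqref{E:Seq_rigid}. Hence the assumption $\pi_2^{\mathrm{orb}}(M_0/\fol)=0$ is equivalent to $\pi_2(B,b_0)=0$, which in turn forces $H=\partial(\pi_2(B,b_0))=\{e\}$ in the exact sequence
\[
0\to H\stackrel{\partial}{\lra} \pi_1(L_0,p_0)\stackrel{\iota_*}{\lra} \pi_1(M_0,p_0)\stackrel{\hat{\theta}_*}{\lra}\pi_1(B,b_0)\to 1.
\]

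Next, I would invoke Step~1 of the proof of Theorem~\ref{T:fund_gp}, which shows that $\iota_*$ restricted to $K=\langle k_1,\ldots,k_r\rangle$ already surjects onto $\pi_1(M_0,p_0)$ (recall that the proof establishes $\iota_*(K)=\iota_*(K)^N=\pi_1(M_0,p_0)$, using both the simple connectivity of $M$ and $\pi_1^{\mathrm{orb}}(M_0/\fol)=1$). In particular, every element of $\pi_1(L_0,p_0)$ differs from an element of $K$ by something in $\ker\iota_*=H$.

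Combining the two observations: since $H$ is trivial and $K$ already surjects onto $\pi_1(M_0,p_0)\cong\pi_1(L_0,p_0)/H=\pi_1(L_0,p_0)$, we obtain $\pi_1(L_0,p_0)=K=\langle k_1,\ldots,k_r\rangle$. Thus $\pi_1(L_0)$ is generated by at most $r$ elements, as claimed. There is no real obstacle here; the only thing to double-check is that the identification $\pi_2(B)=\pi_2^{\mathrm{orb}}(M_0/\fol)$ is indeed the definition being used, which is explicit in the paragraph preceding \eqref{E:Seq_rigid}.
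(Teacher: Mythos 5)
Your proof is correct and is exactly the paper's intended argument: the paper states the corollary is a direct consequence of the proof of Theorem~\ref{T:fund_gp}, namely that $\pi_2^{\mathrm{orb}}(M_0/\fol)=\pi_2(B)=0$ makes $H$ trivial (so $\iota_*$ is injective), while Step~1 gives $\iota_*(K)=\pi_1(M_0,p_0)$ with $K=\langle k_1,\ldots,k_r\rangle$, forcing $\pi_1(L_0,p_0)=K$.
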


\begin{rem} In the proof of Theorem~\ref{T:fund_gp}, the non-abelian part $K_2$ is of a very particular type. One can prove that there exists a surjective homomorphism $\overline{K}\rightarrow K_2$, where $\overline{K}$  is  isomorphic to a finite product of groups $\overline{K}_i$ that are central extensions $1\to \mathbb{Z}_2\to \overline{K}_i\to \mathbb{Z}_2^{n_i}\to 1$, for some $n_i\geq 0$. The only instances known to us of such groups are $\mathbb{Z}_2$, $\mathbb{Z}_4$, the quaternion group $Q$ and products of these groups. 
\end{rem}

%-------------------------------------------------------------------------------------
%	SECTION:  EULER CHARACTERISTIC OF B-FOLIATIONS
%-------------------------------------------------------------------------------------

\section{Euler characteristic of B-foliations}
\label{euler}

\subsection*{Proof of Theorem~\ref{T:Kobayashi}}

% SUBSECTION: PROOF OF THEOREM B (KOBAYASHI'S THEOREM)

Let $W$ be a small tubular neighborhood  $B_\epsilon(\Sigma_0)$ of $\Sigma_0$ and let $V=M\setminus B_{\epsilon/2}(\Sigma_0)$. As $\{\, W,V \, \}$ is an open cover of $M$, we have 
\begin{eqnarray*}
	\chi(M)	& = & \chi(W)+\chi(V)-\chi(W\cap V)\\[.2cm]
			& = & \chi(\Sigma_0)+\chi(V)-\chi(W\cap V).
\end{eqnarray*}
Observe that $V$ and $W\cap V$ are saturated submanifolds without $0$-dimensional leaves. We will now show that $\chi(V\cap W)=\chi(V)=0$, which proves the theorem. Notice that $V\cap W$ retracts to $M'=\partial B_{\epsilon/2}(\Sigma_0)$, which is a compact saturated submanifold of $M$. 

Recall that $M'/\fol|_{M'}$ admits a finite good open cover $\{U_1^*,\ldots, U_k^*\}$,  i.e.~every finite intersection $U_{\alpha_1}^*\cap\ldots\cap U_{\alpha_k}^*$ is contractible 
(cf.~\cite{Wk}). For $1\leq i\leq k$, we let $U_i\subset M$ be the preimage of $U_i^*$ under the leaf projection map.  We can write
\[
\chi(M')=\sum_{i} \chi(U_i)-\sum_{i,j}\chi(U_i\cap U_j)+\ldots,
\]
where the sum is finite. Observe that every finite intersection retracts  to a leaf. Since these leaves are homeomorphic to nontrivial Bieberbach manifolds, their Euler characteristic is $0$. In particular, $\chi(M')=\chi(W\cap V)=0$. We will now show that $\chi(V)=0$. Observe first that $\overline{V}$, the closure of $V$, has boundary $M'$. The double $M''=\overline{V}\cup_{M'} \overline{V}$ is a compact manifold which admits a B-foliation without $0$-dimensional leaves and, as before, $\chi(M'')=0$. On the other hand, $\chi(M'')=2\chi(V)-\chi(M')$, so $\chi(V)=0$. 
\hfill $\square$

%--------------------------------------------------------------------------
%	SECTION: CODIM 1 A-FOLIATIONS 
%--------------------------------------------------------------------------

\section{A-foliations of codimension $1$ on simply connected manifolds}
\label{codim1}

\subsection*{Proof of Theorem~\ref{T:CODIM_1_BFOL_SC}}

We first prove the following lemma.

% LEM

\begin{lem}
\label{L:COD1_SING}
Let $M^{n+1}$ be a compact, simply connected $(n+1)$-manifold. If $(M^{n+1},\fol^n)$ is a codimension one closed singular Riemannian foliation, then the foliation cannot be regular. 
\end{lem}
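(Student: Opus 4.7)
The plan is to assume for contradiction that $\fol$ is regular and then derive a contradiction from the structure of the leaf space. Since $\fol$ is a closed regular Riemannian foliation of codimension one on the compact manifold $M$, the quotient $B := M/\fol$ is a compact one-dimensional Riemannian orbifold.

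I would then invoke the classification of compact $1$-orbifolds: up to orbifold isomorphism the only options are the smooth circle $\sphere^1$ and the \emph{mirror interval} $[0,L]$ with a reflector $\ZZ_2$-point at each endpoint. Their orbifold fundamental groups are $\pi_1^{\mathrm{orb}}(\sphere^1)\cong \ZZ$ and $\pi_1^{\mathrm{orb}}([0,L])\cong \ZZ_2 \ast \ZZ_2$, respectively, both nontrivial. To close the argument, I would quote the same Lytchak theorem already used in the proof of Lemma~\ref{L:ORIENT_FOL}: \cite[Corollary~5.3]{Ly10} gives $\pi_1^{\mathrm{orb}}(M_0/\fol)=1$ whenever $M$ is compact and simply connected. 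Under the regularity assumption $M_0 = M$, so $\pi_1^{\mathrm{orb}}(B)=1$, which contradicts the previous sentence.

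An alternative, more elementary route avoids Lytchak's theorem entirely. Simple connectedness of $M$ forces every double cover of $M$ to be trivial, so the normal line bundle of $\fol$ must be orientable; this rules out the mirror-interval case, in which a reflector endpoint would witness a non-orientable normal direction. Then $B \cong \sphere^1$ and the Riemannian submersion $M \to \sphere^1$ is a genuine fiber bundle with connected fibers, whose long exact sequence of homotopy groups yields a surjection $\pi_1(M) \twoheadrightarrow \pi_1(\sphere^1) \cong \ZZ$, contradicting simple connectedness. I do not anticipate a serious obstacle: the entire argument reduces to identifying the leaf space as a compact $1$-orbifold and reading off its fundamental group, and the mildly technical point is just confirming that a compact, closed, regular Riemannian foliation of codimension one on a compact manifold does in fact produce such an orbifold quotient.
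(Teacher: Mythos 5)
Your argument is correct, but it reaches the contradiction by a somewhat different route than the paper. The paper's proof is shorter and quotes a stronger result: by Molino's structure theory, a regular closed Riemannian foliation on a compact \emph{simply connected} manifold is automatically \emph{simple}, i.e.\ given by the fibers of a Riemannian submersion, so in codimension one there is a fibration $L^n\to M^{n+1}\to\sphere^1$ and the homotopy exact sequence gives the contradiction. Your second route ends at exactly this last step, but you get to the fibration over $\sphere^1$ without invoking Molino's simplicity theorem: you use $H^1(M;\ZZ_2)=\mathrm{Hom}(\pi_1(M),\ZZ_2)=0$ to orient the normal line bundle, which excludes one-sided leaves (equivalently, leaves with nontrivial $\ZZ_2$ linear holonomy, i.e.\ the mirror endpoints of the quotient interval), so the leaf space is the circle, the projection is a proper submersion with connected fibers, hence a bundle by Ehresmann, and the exact sequence forces $\pi_1(M)\twoheadrightarrow\ZZ$. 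Your first route is different again: classify the compact $1$-orbifold quotient ($\sphere^1$ or the mirror interval, with $\pi_1^{\mathrm{orb}}\cong\ZZ$ or $\ZZ_2\ast\ZZ_2$) and contradict $\pi_1^{\mathrm{orb}}(M_0/\fol)=1$, which the paper itself extracts from Lytchak (or, equivalently, from surjectivity of $\pi_1(M_0)\to\pi_1^{\mathrm{orb}}(M_0/\fol)$ coming from the Molino-bundle fibration \eqref{E:Borel_Molino}). In short: the paper buys brevity at the cost of citing Molino's theorem for simply connected manifolds; your route 2 is more elementary and essentially self-contained, and your route 1 stays entirely within the orbifold/Lytchak machinery the paper already uses elsewhere, so either version would serve.
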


% PROOF

\begin{proof}
Suppose that  $\fol^n$ is a regular foliation. Since  $M^{n+1}$ is simply connected, it follows from  work of Molino \cite{Mo} that $\fol^n$ must be a simple foliation,  i.e.~it is given by the fibers of a Riemannian submersion. Hence there is a fibration $L^n\to M^{n+1} \to \sphere^1$. Since $M^{n+1}$ is simply connected, the long exact sequence in homotopy for the fibration yields a contradiction. 
\end{proof}

% PROOF OF THM

We now prove Theorem~\ref{T:CODIM_1_BFOL_SC}. Let $M$ be a compact, simply connected manifold and $(M,\fol)$ a codimension $1$ A-foliation of $M$. By Lemma~\ref{L:COD1_SING},  $\fol$ is singular. Therefore, the leaf space $M^*$ is homeomorphic to a closed interval $[-1,1]$.  In particular, $M_0/\fol\cong (-1,+1)$ is a contractible manifold and there are at most two strata of codimension $2$. By Corollary~\ref{C:REF_COR}, the fundamental group of a regular leaf has at most two generators. By Corollary~\ref{C:torus_leaves}, the regular leaves are homeomorphic to tori. Therefore, a regular leaf must be diffeomorphic to $\sphere^1$ or $T^2$. It follows that $M$ is either $2$- or $3$-dimensional and, since $M$ is simply connected, it must be diffeomorphic to $\sphere^2$ or to $\sphere^3$. In the case of $\sphere^2$, it follows from Theorem~\ref{T:HOMOGENEITY} that the foliation comes from a smooth circle action. 

Suppose now that $M$ is diffeomorphic to $\sphere^3$. In this case, the regular leaves are $2$-dimensional, the singular leaves $L_{\pm}$ are $1$-dimensional, hence they are circles, and the regular leaves fiber over $L_{\pm}$ with fiber a circle. By Corollary~\ref{C:torus_leaves}, the regular leaves are  diffeomorphic  to a $2$-torus. We can therefore describe 
$\sphere^3$ as a double disk bundle
\[
M=D_-\cup_{\phi}D_+,
\]
where $D_{\pm}$  is a disk bundle over $L_{\pm}$ and $\phi:\partial D_+\ra \partial D_-$ is a diffeomorphism. Moreover, the foliation $\fol$ consists of the distance tubes to the zero section (with respect to some Euclidean structure on the disk bundles). Notice that $D_{\pm}$ are solid tori and $\partial D_{\pm}$ are  tori. 

Since $M$ is diffeomorphic to $\sphere^3$, the gluing map $\phi$ is unique up to isotopy. In particular, since the foliation $(M,\fol)$ is uniquely determined up to foliated diffeomorphism by the isotopy type of $\phi$. It follows that there is only one foliated diffeomorphism type of codimension one foliation in $\sphere^3$ that decomposes $\sphere^3$ into two full tori, and this must be the one given by the standard linear $T^2$-action. \hfill$\square$

%--------------------------------------------------------------------------
%	SECTION: CODIM 2 A-FOLIATIONS 
%--------------------------------------------------------------------------

\section{A-foliations of codimension $2$ on simply connected manifolds}
\label{S:CODIM_2}

\subsection*{Proof of Theorem~\ref{T:BFOL_CODIM2}}

Throughout this section we let $(M,\fol)$ be an A-foliation of codimension $2$ on a compact simply connected Riemannian manifold $M$.

% SUBSECTION: REGULAR A-FOLIATIONS

\subsection{Regular A-foliations of codimension $2$} We first consider  the case where $(M,\fol)$ is regular. 

% PROP

\begin{prop}
If $\fol$ is regular, then $M$ is diffeomorphic to $\sphere^3$ and the generic leaf is diffeomorphic to $\sphere^1$.
\end{prop}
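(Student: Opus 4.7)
The plan is to exploit regularity plus simple connectedness to reduce the foliation to a Riemannian submersion and then classify the resulting circle bundle over $\sphere^2$.

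First I would apply the theorem of Molino invoked in the proof of Lemma~\ref{L:COD1_SING}: since $M$ is simply connected and $\fol$ is regular, $\fol$ is a simple foliation, i.e.~it is given by the fibers of a Riemannian submersion $\pi:M\to M^*$ onto a smooth closed manifold $M^*$. Because $\codim\fol=2$, $M^*$ is a closed $2$-manifold. The tail of the homotopy long exact sequence of the fibration $L\to M\to M^*$ gives $\pi_1(M^*)=1$ (using $\pi_0(L)=0$ and $\pi_1(M)=1$), so $M^*$ is diffeomorphic to $\sphere^2$.

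Next I would identify the fiber. By Corollary~\ref{C:torus_leaves}, every regular leaf $L$ is homeomorphic to a torus $T^k$ with $k\geq 1$ (since $n\geq 1$ is implicit in Theorem~\ref{T:BFOL_CODIM2}). The relevant portion of the homotopy sequence of $L\to M\to \sphere^2$ reads
\[
\pi_2(\sphere^2)\stackrel{\partial}{\lra}\pi_1(L)\lra\pi_1(M)=1,
\]
so $\pi_1(L)\cong\ZZ^k$ must be a cyclic quotient of $\ZZ$. This forces $k=1$, and therefore $L$ is homeomorphic, hence diffeomorphic, to $\sphere^1$.

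Finally I would classify the total space. Since $\pi_1(\sphere^2)=0$, the circle bundle $\sphere^1\to M\to\sphere^2$ is orientable and thus a principal $\SO(2)$-bundle classified by its Euler number $e\in H^2(\sphere^2;\ZZ)\cong\ZZ$; the connecting map $\partial$ above is multiplication by $e$, and the exact sequence shows $|e|=1$. Hence $M$ is the total space of the Hopf bundle, so $M$ is diffeomorphic to $\sphere^3$, and the foliation is the Hopf fibration.

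The only real subtlety is appealing to Molino's theorem to ensure $M^*$ is a smooth manifold (rather than merely an orbifold), but this is exactly the input already used in Lemma~\ref{L:COD1_SING}; everything else is a direct long-exact-sequence computation together with the standard classification of $\sphere^1$-bundles over $\sphere^2$.
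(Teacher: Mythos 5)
There is a genuine gap at your very first step. Molino's result invoked in Lemma~\ref{L:COD1_SING} does not give you, in codimension $2$, a simple foliation, i.e.\ a Riemannian submersion onto a smooth closed surface $M^*$. A regular closed Riemannian foliation on a compact, simply connected manifold may well have exceptional leaves with nontrivial holonomy, in which case the leaf space is only a Riemannian $2$-orbifold (possibly with cone points) and the quotient map is a Seifert-type fibration, not a submersion. The weighted Hopf foliations on $\sphere^3$ are exactly such examples: all leaves are closed circles, $\sphere^3$ is simply connected, yet the quotient is a spindle orbifold and the foliation is not a fiber bundle; this is precisely why Theorem~\ref{T:BFOL_CODIM2} singles out the weighted Hopf case. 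Consequently your homotopy exact sequence for a genuine fibration $L\to M\to\sphere^2$, the Euler-class classification of circle bundles over $\sphere^2$, and in particular the final assertion that the foliation is \emph{the} Hopf fibration (false for weighted Hopf actions, and more than the proposition claims) are all unjustified as stated.

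The paper's proof circumvents this: by \cite[Theorem~1.6]{Ly10} the quotient $M/\fol$ is a compact, simply connected $2$-orbifold without boundary, hence homeomorphic to $\sphere^2$ only as a topological space; one then uses the homotopy fibration $L\to M\to B$ from the Molino bundle construction, where $B$ is Haefliger's classifying space of the orbifold, which is merely \emph{rationally} equivalent to $M/\fol$. Tensoring the long exact homotopy sequence with $\QQ$ gives a surjection $\QQ\to\pi_1(L)\otimes\QQ$, and since $L$ is a torus by Corollary~\ref{C:torus_leaves}, this forces $L\cong\sphere^1$; then $M$ is a simply connected $3$-manifold and Perelman's theorem gives $M\cong\sphere^3$. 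If you want to salvage your outline, you must either rule out exceptional leaves (which you cannot in general) or replace the honest submersion by this orbifold/classifying-space argument with rational coefficients, and conclude $M\cong\sphere^3$ via the Poincar\'e conjecture rather than via an integral Euler-number computation.
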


% PROOF 

\begin{proof}
Since $(M,\fol)$ is regular and has codimension $2$, it follows from \cite[Theorem~1.6]{Ly10} that the quotient $M/\fol$ is a compact simply connected orbifold without boundary. In particular, $M/\fol$ is homeomorphic to $\sphere^2$.

In our case, the fibration \eqref{E:Borel_Molino} is given by $L\rightarrow M\rightarrow B$, where $L$ is a regular leaf of $\fol$ and $B$ is the classifying space of the orbifold $M/\fol$. Therefore, there is a rational homotopy equivalence $B\rightarrow M/\fol$ and, as a consequence, $\pi_2(B)\otimes\mathbb{Q}\cong \mathbb{Q}$. Tensoring the long exact homotopy sequence of the fibration $L\rightarrow M\rightarrow B$ with $\mathbb{Q}$, we get
\[
\mathbb{Q}\rightarrow \pi_1(L)\otimes\mathbb{Q}\rightarrow 0.
\]
By Corollary~\ref{C:torus_leaves}, $L$ is a torus. Therefore, the sequence above implies that $L$ is diffeomorphic to $\sphere^1$. Since $(M,\fol)$ has codimension $2$, $M$ is $3$-dimensional. By Perelman's proof of the Poincar\'e conjecture, $M$ must be diffeomorphic to $\sphere^3$.
\end{proof}

By Theorem~\ref{T:HOMOGENEITY}, a regular  foliation on $\sphere^3$ by circles must be homogeneous. Moreover, the circle action must be equivalent to a linear circle action on $\sphere^3$ (cf.~\cite{Or}).  

% SUBSECTION: SINGULAR B-FOLIATIONS OF CODIM 2

\subsection{Singular A-foliations of codimension $2$} 
We now consider the case where $(M,\fol)$ is singular. 
\\

Since the foliation is not regular, by  \cite{Ly10}  there are no exceptional leaves and the leaf space $M^*$ is homeomorphic to a $2$-dimensional orbifold $B$ with non-empty boundary corresponding to singular strata. As in the case of group actions, the fundamental group of $M$ surjects onto the fundamental group of the leaf space (cf.~\cite[Chapter~II, Theorem~6.2 and Corollary~6.3]{Br}). Therefore, the leaf space is simply connected  and hence it is homeomorphic to a disk. The boundary of $B$ consists of the union of geodesic arcs.  The points in the interior of these arcs correspond to leaves which we will call \emph{least singular leaves}, while the vertices of the leaf space, i.e.~the points where two geodesic arcs in the boundary meet, correspond to leaves which we will call \emph{most singular leaves} (see Figure~\ref{F:lspace_labels}).

% FIGURE

\begin{figure}
\psfrag{A}{Most singular leaf}
\psfrag{B}{Least singular leaf}
\psfrag{C}{Regular leaf}
\centering
\includegraphics[scale=1]{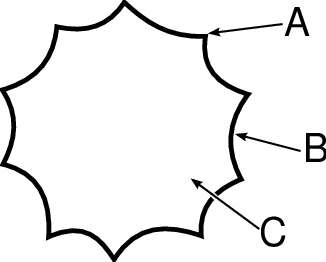}
\caption{Example of a leaf space of a codimension $2$ A-foliation.}
\label{F:lspace_labels}

\end{figure}

Let $L$ be a singular leaf and fix $p\in L$.  By Corollary~\ref{C:Inf_B_fol}, the infinitesimal foliation $(\sphere^r_p,\fol_p)$ is a codimension one A-foliation, whose quotient is a closed interval. By Theorem~\ref{T:CODIM_1_BFOL_SC}, the infinitesimal foliation $(\sphere^r_p,\fol_p)$ must be one of the homogeneous foliations $(\sphere^2,\sphere^1)$ or  $(\sphere^3,T^2)$. Since $\sphere^r_p$ is a round unit sphere, it follows from the main theorem in \cite{Ra} that the foliation is isometric to  one of the homogeneous foliations $(\sphere^2,\sphere^1)$ or  $(\sphere^3,T^2)$ induced by orthogonal actions on round unit spheres. In the first case, the leaf space of the homogeneous foliation is isometric to a closed interval of length $\pi$ and corresponds to the infinitesimal foliation of a least singular leaf. In the second case, the leaf space of the homogeneous foliation is isometric to a closed interval of length $\pi/2$ and corresponds to the infinitesimal foliation of a most singular leaf. 

Since there are no exceptional leaves, by Remark~\ref{R:Space_Dir}, the holonomy action is trivial. Hence the space of directions at any point $p^*\in M^*$  is isometric to the leaf space of  the infinitesimal foliation at $p^*$. In particular, the angle between geodesic arcs meeting at vertices of $M^*$ is $\pi/2$ and fibration~\eqref{E:Inf_fibration} yields the desired metric fibrations in part (2) of Theorem~\ref{T:BFOL_CODIM2}. Finally, since the regular leaves are homeomorphic to $n$-tori and the edges of the leaf space correspond to codimension $2$ strata, Corollary~\ref{C:REF_COR} implies that there must be at least $n$ edges. This concludes the proof of Theorem~\ref{T:BFOL_CODIM2}. \hfill $\square$

%------------------------------------------------------------------------------------------------------------------------------------
%	SECTION:  CURVATURE AND A-FOLIATIONS OF CODIM 2
%------------------------------------------------------------------------------------------------------------------------------------

\section{Curvature and A-foliations of codimension $2$}
\label{S:CURV_1}

% SUBSECTION: PROOF OF THEOREM 

\subsection{Proof of Theorem~\ref{T:QPC_D4_CODIM2}} 
\label{SS:Proof_4D_QPC}
We prove assertions (1) and (2) separately. 
\\

% PROOF OF  (1)

\noindent\textit{Proof of assertion (1).} Suppose $n=4$. Then the leaf space is a $2$-dimensional Riemannian manifold of nonnegative curvature, homeomorphic to a $2-$-disk, with polyhedral boundary and with positive curvature in an open subset. By Theorem~\ref{T:BFOL_CODIM2}, the leaf space $M^*$ has $m\geq 2$ vertices where boundary edges meet at an angle of $\pi/2$. These points, which we will denote by $p_i^*$, correspond to $0$-dimensional leaves $p_i$ in $M$. Since $M$ is quasipositively curved, so is $M^*$. By the Gauss-Bonnet theorem, $m\pi/2<2\pi\chi(M^*)=2\pi$, which implies that $m=2$ or $m=3$. In either case, $M$ decomposes as a union $M=D_1\cup_{\phi} D_2$, where $D_1$ is diffeomorphic to a small distance ball around $p_1$ and $D_2$ is diffeomorphic to a tubular neighborhood around either $p_2$, if $m=2$, or to the closure of the stratum opposite to $p_1$, if $m=3$. Therefore, $\partial D_1\simeq \partial D_2\simeq \sphere^3$. 

If $m=2$,  there exist diffeomorphisms $\psi_i:D_i\to B^4$ where $B^4$ is a unit ball in $\RR^4$, and $M$ is diffeomorphic to $B^4\cup_{\ol{\phi}}B^4$, where $\ol{\phi}=\psi_2|_{\partial D_2}\circ \phi \circ \psi_1|_{\partial D_1}^{-1}: \sphere^3 \to \sphere^3$. By Hatcher's proof of the Smale conjecture \cite{Ha}, $\phi$ is isotopic to $\ol{\phi}_0\in \mathrm{O}(4)$, and thus $M\simeq B^4\cup_{\ol{\phi}_0}B^4\simeq \sphere^4$.

If $m=3$, the closure $N$ of the stratum opposite to $p_1$ is a $2$-dimensional smooth submanifold of $M$ and $\fol$ restricts to a codimension one foliation with two singular leaves. Hence, $N$ is diffeomorphic to $\sphere^2$ and, since $\partial D_2\simeq \sphere^3$, there is a diffeomorphism  $\psi_2:D_2\to B$, where $B$ is a tubular neighborhood of a totally geodesic $\sphere^2$ in $\CC\PP^2$. Again, there is a diffeomorphism $\psi_1:D_1\to B^4$, and $M$ is diffeomorphic to $B^4\cup_{\ol{\phi}}B$ where $\ol{\phi}$ is defined using $\psi_1,\psi_2$, as before. Once again, $\ol{\phi}$ is isotopic to $\ol{\phi}_0\in \mathrm{O}(4)$, and $M\simeq B_4\cup_{\ol{\phi}} B\simeq\CP^2$.
\\

% PROOF OF  (2)

\noindent\textit{Proof of assertion (2).} By the work of Barden and Smale \cite{Ba, Sm}, it suffices to verify that $H_2(M,\mathbb{Z})=0$. The leaf space $M^*$ is homeomorphic to a disk and it has at least three  vertices, by Theorem~\ref{T:BFOL_CODIM2}. A comparison argument as in part (1) implies that there are exactly three vertices in $M^*$.

Let $X_-\In M$ be the preimage of an edge of $M^*$, and let $X_+=\sphere^1$ be the preimage of the opposite vertex. The preimage $X_-$ is smooth closed submanifold of $M$ without boundary, and it is a deformation retract of $M\setminus X_+$. Since $\codim(X_+)>2$, $\pi_1(X_-)=\pi_1(M\setminus X_+)=1$ and therefore $X_-=\sphere^3$. It follows that $M$ admits a decomposition as a double disk bundle
\[
M=D(X_-)\cup D(X_+),\qquad \partial D(X_-)=\partial D(X_+)=X_0
\]
where $X_0$ is a distance tube around $X_-$ and $X_0\to X_1$ is an orientable circle bundle. In particular, $X_0=\sphere^3\times \sphere^1$ and from the Meyer-Vietoris sequence applied to the double disk decomposition we obtain
\[
H_2(\sphere^3\times \sphere^1,\mathbb{Z})\to H_2(\sphere^3,\mathbb{Z})\to H_2(M,\mathbb{Z})\to \ZZ \to \ZZ\to 0,
\]
from which it follows easily that $H_2(M,\mathbb{Z})=0$.\hfill $\square$

% SUBSECTION: PROOF OF THEOREM E

\subsection{Proof of Theorem~\ref{T:NNC_D4_CODIM2}} 
\label{SS:NNEG_CURV_ERK}
We prove parts (1) and (2) separately.
\\

\noindent\textit{Proof of part (1).} 
By the Gauss-Bonnet Theorem, the leaf space $M^*$ has at most four vertices. On the other hand,  by Theorem~\ref{T:BFOL_CODIM2},  $M^*$ has at least two vertices. 
Hence, by Theorem~\ref{T:Kobayashi}, the Euler characteristic of $M^4$ is $2$, $3$ or $4$, and it follows from the  work of Freedman \cite{Fr} that $M^4$ is homeomorphic to $\sphere^4$, $\CP^2$, $\CP^2\#\pm\CP^2$ or $\sphere^2\times\sphere^2$.
 If $M^*$ has two or three vertices, then the leaf space structure is the same as the one in the proof of part (1) of Theorem~\ref{T:QPC_D4_CODIM2} and $M$ admits a decomposition as a double disk bundle. It follows that $M^4$ is diffeomorphic to $\sphere^4$, if $M^*$ has two vertices, or to $\CP^2$, if $M^*$ has three vertices. If $M^*$ has four vertices, then $M^*$ is isometric to a flat rectangle and $M$ also admits  a double disk bundle decomposition. In this case, it follows from Theorem~1.1 in \cite{GR} that $M$ is diffeomorphic to one of  $\CP^2\#\pm\CP^2$ or $\sphere^2\times\sphere^2$.
\\

\noindent\textit{Proof of part (2).} By Theorem~\ref{T:BFOL_CODIM2}, the leaf space $M^*$ has at least three vertices. On, the other hand, since the leaf space is nonnegatively curved, it may have at most four vertices. If there are exactly three, then, proceeding as in the proof of Theorem~\ref{T:QPC_D4_CODIM2}, we conclude that $M$ is diffeomorphic to $\sphere^5$. Therefore, we can restrict our attention to the case in which  $M^*$ has four vertices, so that $M^*$ is isometric to a flat rectangle $[-1,1]\times[-1,1]$. To conclude that $M$ is diffeomorphic to one of the two $\sphere^3$-bundles over $\sphere^2$, it suffices to prove that $H_2(M,\ZZ)=\ZZ$ and then appeal to the work of Barden \cite{Ba} and Smale \cite{Sm}.
\\

Let $X_{\pm}\In M$ and $X_0\In M$ be the preimage of $\{\pm 1\}\times [-1,1]\In M^*$ and $\{0\}\times [-1,1]$, respectively. Similarly, define $Y_{\pm},Y_0$ to be the preimages of $[-1,1]\times\{\pm 1\}$, $[-1,1]\times\{0\}$. They are all smooth closed submanifolds of $M$ without boundary, there are maps $\phi_{\pm}:X_0\to X_\pm$ which are circle bundles, and $M$ can be written as a double disk bundle
\begin{equation}\label{E:double_disk}
M=D(X_-)\cup D(X_+),\qquad \partial D(X_-)=\partial D(X_+)=X_0
\end{equation}
Let $N$ be the preimage of the point $(-1,-1)\in M^*$. The leaf $N$ is diffeomorphic to $\sphere^1$, and its normal bundle $\nu (N)$ has rank $4$ and is orientable. Since the universal cover of $N$ is contractible, the structure group of $\nu(N)$ is completely determined by an isometry $P$ of $\sphere^3_p=\nu^1_pN$ at some point $p\in N$. Since the structure group preserves the foliation $\fol$, the isometry $P$ preserves the infinitesimal foliation $(\sphere^3_p,\fol_p)$, which is isometric to the foliation induced by the isometric $T^2$ action on $\sphere^3$. Since $P$ also preserves the orientation of $\sphere^3$, $P\in \SO(4)\cap \Iso(\sphere^3_p,\fol_p)=\SO(4)\cap (\On(2)\times \On(2))=\mathrm{S}(\On(2)\times \On(2))$.

\begin{lem}
$P\in \SO(2)\times \SO(2)$.
\end{lem}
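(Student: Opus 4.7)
The plan is to recognize $\mathrm{S}(\On(2)\times\On(2))$ as having exactly two connected components — the identity component $\SO(2)\times\SO(2)$ and the coset of \emph{double reflections}, i.e.\ pairs $(A,B)$ with $\det A=\det B=-1$ — and to rule out the latter by deriving a contradiction with Corollary~\ref{C:torus_leaves}.

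To implement this I would first unpack the bundle structure of a regular leaf near $N$. Since the submanifolds $X_-$ and $Y_-$ are globally distinguishable in $M$ (this is precisely what was used above to place $P$ inside $\mathrm{S}(\On(2)\times\On(2))$ rather than in its wreath-product extension), the holonomy $P$ cannot swap the two endpoints of the leaf space $\sphere^3_p/\fol_p \cong [0,\pi/2]$ and hence acts trivially on it. By Remark~\ref{R:Space_Dir}, this forces the infinitesimal fibration~\eqref{E:Inf_fibration} at a regular leaf $L$ of $\fol$ lying in a small tubular neighborhood of $N$ to take the form $T^2 \to L \to N=\sphere^1$, with the $T^2$ fiber a regular orbit of the infinitesimal $T^2$-action on $\sphere^3_p\subset\CC^2$, and with bundle monodromy given by the restriction of $P$ to this orbit.

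Next I would test what this restriction looks like if $P$ belonged to the non-identity component. A direct calculation shows that any such $P$ restricts to $T^2$ as an involution of the form $(z_1,z_2)\mapsto(e^{i\alpha}\bar z_1, e^{i\beta}\bar z_2)$, inducing the automorphism $x\mapsto -x$ on $\pi_1(T^2)=\ZZ^2$. Consequently the mapping torus $L$ would have $\pi_1(L)\cong\ZZ^2\rtimes_{-I}\ZZ$ and abelianization $H_1(L)\cong\ZZ\oplus\ZZ_2\oplus\ZZ_2$; in particular $L$ would carry $2$-torsion in its first homology and could not be homeomorphic to a torus. This contradicts Corollary~\ref{C:torus_leaves}, which forces every regular leaf of an A-foliation on a compact, simply connected manifold to be a torus. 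Hence $P$ must lie in the identity component $\SO(2)\times\SO(2)$.

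The principal subtlety is the identification of a tubular neighborhood of $N$ with $\tilde N\times_{\pi_1(N)}\RR^4$ compatibly with the infinitesimal foliation, which is what permits the monodromy of the bundle $L\to N$ to be read off as the restriction of $P$; once that identification is in place, the rest of the argument consists of an elementary computation of a semidirect product and a single appeal to Corollary~\ref{C:torus_leaves}.
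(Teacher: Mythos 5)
Your proposal is correct, but it closes the argument by a genuinely different route than the paper. Both proofs start from the same identification of the foliated tube around $N$ with the suspension of $(\sphere^3_p,\fol_p)$ by $P$, together with the fact that the holonomy cannot interchange the two singular circles of $\fol_p$ (the paper gets triviality of the holonomy action on the interval $\sphere^3_p/\fol_p$ from the absence of exceptional leaves), so that each leaf in $\partial\Tub_\epsilon(N)$ is the mapping torus of the restriction of $P$ to the corresponding infinitesimal leaf. From there the paper examines the \emph{singular} leaves: if $P$ were a double reflection, $L_1,L_2$ would be Klein bottles, hence $X_\pm,Y_\pm$ non-orientable and the bundles $X_0\to X_\pm$ non-orientable, and the contradiction is extracted globally from the double-disk-bundle decomposition of $M$ together with \cite[Table 1.4]{GH} (forcing $\pi_1(X_0)$ finite) against the decomposition $X_0=D(K_1)\cup D(K_2)$ glued along $T^3$ and Mayer--Vietoris (forcing $\pi_1(X_0)$ infinite). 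You instead examine the \emph{regular} leaves: a double reflection restricts to a regular infinitesimal orbit as $(z_1,z_2)\mapsto(e^{i\alpha}\bar z_1,e^{i\beta}\bar z_2)$, inducing $-I$ on $\pi_1(T^2)$, so a nearby regular leaf would have $\pi_1\cong\ZZ^2\rtimes_{-I}\ZZ$ and $H_1\cong\ZZ\oplus\ZZ_2\oplus\ZZ_2$, which contradicts Corollary~\ref{C:torus_leaves} directly; your group computations are correct. Your argument is shorter and purely local near $N$, avoiding the global topology of $X_0$ altogether; its only delicate point is reading off the monodromy of the fibration $L\to N$ (up to isotopy) from $P$, but this is precisely the same identification the paper uses when it asserts that $L_1,L_2$ are tori or Klein bottles according to the component of $P$, so you assume no more than the paper does, and you correctly flag it. Note finally that the paper's detour also furnishes the three facts listed right after the lemma (all $2$-dimensional leaves are tori, $X_\pm,Y_\pm$ orientable, $X_0\to X_\pm$ principal); with your proof these still follow immediately from the first, component-counting part of the paper's argument once $P\in\SO(2)\times\SO(2)$ is known, so nothing downstream is lost.
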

\begin{proof}
$P$ belongs to $\SO(2)\times\SO(2)$ if and only if it preserves the orientation of both singular leaves $\leaf_1, \leaf_2\simeq \sphere^1$ of the infinitesimal foliation at $p$, otherwise it reverses both. If $L_1,L_2\in \fol$ are the (singular) leaves containing $\exp_p\leaf_1$, $\exp_p\leaf_2$ respectively, they are 2-dimensional and they are both tori if and only if $P\in \SO(2)\times \SO(2)$, otherwise they are both Klein bottles. It follows that either all the 2-dimensional leaves are orientable (if $P\in \SO(2)\times \SO(2)$) or none of them is. Moreover, since the generic leaf in $X_{-}$ is orientable if and only if $X_-$ is (and the same holds for $X_+, Y_{\pm}$) it follows that $X_{\pm},Y_{\pm}$ are all orientable if and only if $P\in \SO(2)\times \SO(2)$, otherwise none of them is.

Suppose now that $P\notin \SO(2)\times \SO(2)$. As we said, it follows that $X_+,X_-$ are non-orientable, and since $X_0$ is always orientable, the circle bundles $\phi_{\pm}:X_0\to X_{\pm}$ are non-orientable. It follows from equation \eqref{E:double_disk} and \cite[Table 1.4]{GH} that $\pi_1(X_0)$ is finite. On the other hand, $X_0$ itself can be written as a double disk bundle
\[
X_0=D(K_{-})\cup D(K_{+}),\qquad \partial D(K_{-})=\partial D(K_{+})=T^3,
\]
where $K_\pm$ are Klein bottles. From the Mayer-Vietoris sequence, it follows that $\pi_1(X_0)$ cannot be finite, and this provides a contradiction.
\end{proof}
The following statements immediately follow from the proof of the lemma:
\begin{itemize}
\item Every $2$-dimensional leaf is a torus.
\item The manifolds $X_{\pm}$, $Y_{\pm}$ are orientable.
\item The bundles $X_0\to X_{\pm}$ are orientable, and therefore principal $\sphere^1$-bundles.
\end{itemize}
From the facts listed above, $X_-$ can be decomposed as a union of two solid tori. Therefore, $X_-$ is diffeomorphic to either $\sphere^2\times \sphere^1$ or to a lens space $L_m=\sphere^3/\ZZ_m$. Since $\phi_-:X_0\to X_-$ is a principal bundle, $X_0$ is homotopy equivalent to either $T^2\times \sphere^2$ (only if $X_-=\sphere^1\times \sphere^2$) or to $\sphere^1\times L_m$. If $X_0\sim S^1\times L_m$, consider the homotopy fibration $F\to X_0 \hookrightarrow M$. Since $\phi_{\pm}$ are orientable, it follows from \cite[Table 1.4]{GH} that $\pi_1(F)=\ZZ\oplus \ZZ$, and from the long exact sequence in homotopy we obtain
\[
0\to \pi_2(M)\to \ZZ\oplus\ZZ\to \ZZ\oplus \ZZ_m\to 0.
\]
Therefore $H_2(M,\mathbb{Z})=\pi_2(M)=\ZZ$.

The only possibility left is that $X_{0}\sim \sphere^2\times T^2$, and $X_{\pm}=\sphere^2\times \sphere^1$. Applying the Meyer-Vietoris sequence to the double disk bundle decomposition \eqref{E:double_disk} we obtain
\[
H_2(\sphere^2\times T^2)\stackrel{\Delta_*}{\lra} H_2(\sphere^2\times\sphere^1)\oplus H_2(\sphere^2\times\sphere^1)\to H_2(M)\stackrel{\partial_*}{\lra} \ZZ^2\to \ZZ^2\to 0.
\]
It follows immediately that $\partial_*=0$. Moreover, the map
 \[
 \Delta_*:H_2(\sphere^2\times T^2)=\ZZ^2\to H_2(\sphere^2\times\sphere^1)\oplus H_2(\sphere^2\times\sphere^1)=\ZZ^2
 \] 
is explicitly computable and its cokernel is $\ZZ$. Therefore $H_2(M, \mathbb{Z})=\ZZ$ in this case as well.
\hfill$\square$

%------------------------------------------------------------------------------------------------------------------------------------
%	SECTION:  FOLIATIONS IN THE PRESENCE OF CURVATURE II: FOLIATIONS BY CIRCLES
%------------------------------------------------------------------------------------------------------------------------------------

\section{Curvature and singular Riemannian foliations by circles}
\label{S:CURV_2}

\subsection*{Proof of Theorem~\ref{T:D4_S1FOL}}

Throughout this section we let $(M,\fol)$ be a singular Riemannian foliation by circles on a compact, simply connected Riemannian $4$-manifold. By Theorem~\ref{T:HOMOGENEITY}, $(M,\fol)$ is a homogeneous foliation, i.e.~it is induced by a smooth effective circle action on $M$. By work of  Fintushel \cite{F1,F2}, Pao \cite{Pa}, and Perelman's proof of the Poincar\'e conjecture, a compact, simply connected smooth $4$-manifold with a smooth effective circle action is diffeomorphic to a connected sum of copies of $\sphere^4$, $\pm\CP^2$ and $\sphere^2\times \sphere^2$.  It follows from Theorem~\ref{T:HOMOGENEITY} that a compact, simply connected $4$-manifold with a singular Riemannian foliation by circles is diffeomorphic to a connected sum of copies of $\sphere^4$, $\pm\CP^2$ and $\sphere^2\times \sphere^2$.

 The leaf space structure of $(M,\fol)$ corresponds to the orbit space structure of a smooth circle action on a compact, simply connected smooth $4$-manifold (cf.~\cite{F1}). In particular, the leaf space $M^*$ is a simply connected topological $3$-manifold, possibly with boundary, the components of the $0$-dimensional stratum are homeomorphic to $2$-spheres or isolated points and the boundary components of $M^*$ are $2$-sphere components in the $0$-dimensional stratum. With these preliminary remarks in place, we prove the rest of Theorem~\ref{T:D4_S1FOL}.

% SUBSECTION: PROOF OF MAIN THM IN THIS SECTION

\subsection*{Proof of  (1) and (2) of Theorem~\ref{T:D4_S1FOL}}
\label{SS:Pf_Thm_4DS1FOL}

By Poincar\'e duality, $2\leq \chi(M)$. 
By the discussion in the first paragraph of the proof, 
to prove parts (1) and (2) of the theorem it suffices to show that $\chi(M)\leq 3$, when $M$ is positively curved, and $\chi(M)\leq 4$, when $M$ is nonnegatively curved.

By the results in the preceding subsection, the leaf space $M^*$ is a simply connected topological manifold with an Alexandrov space structure. In particular, $M^*$ is positively curved if $M$ has positive curvature, and $M^*$ is nonnegatively curved if $M$ has nonnegative curvature. Since the proof follows as in the proof of an isometric circle action, via comparison arguments already found in the literature (cf.~\cite{HK, GS01,SY,GW}), we only indicate the necessary steps.
\\

\noindent\textit{Positively curved case}. Suppose first that  $\partial M^*$ is  not empty, and let $F^*$ be a  connected component of $\partial M^*$. Then, by the Soul Theorem for Alexandrov spaces, there exists a unique  point $p_0^*$ at maximal distance from $F^*$ and all the points between $F^*$ and $p_0^*$ must correspond to regular leaves. In particular, the boundary of $M^*$ is always connected. The point $p_0^*$ is either a regular leaf or an isolated point in $\Sigma_0$. Therefore $\chi(M)\leq 3$. Suppose now that $\partial M^*$ is empty. Then $\Sigma_0$ consists only of isolated points. The space of directions at an isolated point in the $0$-dimensional stratum is isometric to the quotient of a round unit $\sphere^3$ by an isometric circle action without fixed points. By a triangle comparison argument as in \cite{GW}, there can be at most three such points. 
Therefore, $\chi(\Sigma_0)\leq 3$.
\\

\noindent\textit{Nonnegatively  curved case.} Suppose first that $\partial M^*$ has at least two components, $\partial M^*_{-}$ and $\partial M^*_{+}$. Then $M^*$ is isometric to $\partial M^*_+\times [0,1]$ and there are no isolated points in $\Sigma_0$. Hence $\chi(M)= 4$. Suppose now that $\partial M^*$ is connected and let $C^*$ be the set at maximal distance from $\partial M^*$ in $M^*$. There can be at most $2$ isolated points in $\Sigma_0$ contained in $C^*$, so $\chi(M^*)\leq 4$. Finally, suppose that $\partial M^*$ is empty and $\Sigma_0$ consists only of isolated points. As in the positively curved case, a triangle comparison argument as in \cite{GW} implies that there can be  at most four such points, so $\chi(M)\leq 4$.\hfill $\square$

%--------------------------------------------------------------------------
% 				BIBLIOGRAPHY
%--------------------------------------------------------------------------

\bibliographystyle{amsplain}

% ----------------------------------------------------------------
% END BIBLIOGRAPHY-------------------------------------------------
% ----------------------------------------------------------------

\end{document}